\numberwithin{equation}{section}
\newtheorem{thm}{Theorem}[section]
\newtheorem{cor}[thm]{Corollary}
\newtheorem{prop}[thm]{Proposition}
\newtheorem{lemma}[thm]{Lemma}
\theoremstyle{definition}
\newtheorem{defn}{Definition}[section]
\newtheorem{assum}{Assumption}[section]
\newtheorem{notn}{Notation}[section]
\theoremstyle{remark}
\newtheorem{remark}{Remark}[section]
\def\beq{\begin{equation}}
\def\eeq{\end{equation}}
\def\bt{\begin{thm}}
\def\et{\end{thm}}
\def\bc{\begin{cor}}
\def\ec{\end{cor}}
\def\bp{\begin{prop}}
\def\ep{\end{prop}}
\def\bl{\begin{lemma}}
\def\el{\end{lemma}}
\def\bd{\begin{defn}}
\def\ed{\end{defn}}
\def\bass{\begin{assum}}
\def\eass{\end{assum}}
\def\bark{\begin{remark}}
\def\eark{\end{remark}}
\def\bn{\begin{notn}}
\def\en{\end{notn}}
\def\ba{\begin{enumerate}[(a)]}
\def\bei{\begin{enumerate}[(i)]}
\def\be{\begin{enumerate}[(1)]}
\def\ee{\end{enumerate}}
\def\bi{\begin{itemize}}
\def\ei{\end{itemize}}
\def\Bar{\overline}
\def\RR{\ensuremath{\mathbb{R}}}
\def\ii{\item}
\def\Lp{\left(}
\def\Rp{\right)}
\def\LP{\left\{ }
\def\RP{\right\}}
\def\LT{\left[}
\def\RT{\right]}
\def\mid{\middle\vert}
\def\epf{\end{proof}}
\DeclareMathOperator{\Exp}{\mathbb{E}}
\DeclareMathOperator{\Prob}{\mathbb{P}}
\def\b{\boldsymbol}
\def\C{\b{C}} 
\def\R{\b{R}} 
\def\Chi{\b{\chi}} 
\def\N{\b{N}} 
\def\FF{\mathcal{F}} 
\def\h{h_{\H}}
\newcommand{\mynorm}[1]{\rho\Lp #1\Rp} 
\newcommand{\Lone}[1]{\left\|#1\right\|_{\ell^1}}
\newcommand{\csd}[1]{\frac{\C_{#1}}{S_{#1}}}
\newcommand{\csl}[1]{\C_{#1}/S_{#1}}
\def\cslnmi{C_{n-1,i}/S_{n-1}}
\def\mgdh{\b{D}}
\def\mgdhc{D}
\def\bxi{\b{\xi}} 
\def\H{\b{H}} 
\def\rpf{\b{\zeta}_{\H}} 
\def\lpf{\b{\pi}_{\H}} 
\def\pfval{\lambda_{\H}} 
\def\U{\b{U}} 
\def\V{\b{V}} 
\def\J{\b{J}} 
\def\I{\b{I}} 
\def\X{\b{X}} 
\def\Y{\b{Y}} 
\def\Z{\b{Z}} 
\def\x{\b{x}} 
\def\U{\b{U}}
\def\bbeta{\b{\beta}} 
\newcommand{\CExp}[2]{\Exp\Lp #1\mid\FF_{#2}\Rp}
\newcommand{\MGD}[2]{#1 - \CExp{#1}{#2}}
\newcommand{\ieq}[1]{\b{I}_{G,#1}}
\def\bb{\b{B}}
\newcommand{\st}[2]{\Bar{t}_{#1}\Lp #2\Rp}
\newcommand{\ST}[2]{\Bar{\tau}_{#1}\Lp #2\Rp}
\newcommand{\stb}[2]{\underline{t}_{#1}\Lp #2\Rp}
\newcommand{\STb}[2]{\underline{\tau}_{#1}\Lp #2\Rp}
\def\pt{t}
\def\0{\b{0}}
\def\1{\b{1}}
\def\h{h_{\H}}
\def\Xb{\Bar{\b{X}}}
\def\e{\b{e}}
\def\tX{\widetilde{\b{X}}}
\def\tG{\widetilde{G}}
\def\te{\widetilde{\b{e}}}
\def\hH{\widehat{h}_{G}}
\def\hX{\widehat{\X}}
\def\htX{\widehat{\widetilde{\X}}}
\def\tbb{\widetilde{\bb}}
\DeclareMathOperator{\osc}{Osc}
\def\themax{\max_{p_n\leq m\leq q_n}}
\def\themin{\min_{p_n\leq m\leq q_n}}
\begin{document}

\begin{frontmatter}
\title{Stochastic Approximation with Random Step Sizes and Urn Models with Random Replacement Matrices Having Finite Mean}
\runtitle{Random Step Size Stochastic Approximation and Urn Model}

\begin{aug}
\author{\fnms{Ujan} \snm{Gangopadhyay}\ead[label=e1]{ujangangopadhyay@gmail.com}}
\and
\author{\fnms{Krishanu} \snm{Maulik}\corref{}\thanksref{t2}\ead[label=e2]{krishanu@isical.ac.in}\ead[label=u1,url]{http://www.isical.ac.in/$\sim$krishanu}}

\thankstext{t2}{The research of the second author was partially supported by an Unrestricted Research Grant from Microsoft Research India. The authors like to thank B.V.\ Rao and Vivek S.\ Borkar for helpful and enlightening discussions.}

\runauthor{U. Gangopadhyay and K. Maulik}

\affiliation{University of Southern California and Indian Statistical Institute}

\address{Ujan Gangopadhyay\\ Department of Mathematics\\ University of Southern California\\ Los Angeles, CA\\ USA\\ \printead{e1}}
\address{Krishanu Maulik\\ Theoretical Statistics and Mathematics Unit\\ Indian Statistical Institute\\ Kolkata\\ India\\ \printead{e2}\\ \printead{u1}}
\end{aug}

\begin{abstract}
Stochastic approximation algorithm is a useful technique which has been exploited successfully in probability theory and statistics for a long time. The step sizes used in stochastic approximation are generally taken to be deterministic and same is true for the drift. However, the specific application of urn models with random replacement matrices motivates us to consider stochastic approximation in a setup where both the step sizes and the drift are random, {but the sequence is uniformly bounded. The problem becomes interesting when the negligibility conditions on the errors hold only in probability.} We first prove {a result} on stochastic approximation in this setup, which {is} new in the literature. Then, as an application, we study urn models with random replacement matrices.

In the urn model, the replacement matrices need neither be independent, nor identically distributed. We assume that the replacement matrices are only independent of the color drawn in the same round conditioned on the entire past. We relax the usual second moment assumption on the replacement matrices in the literature and require only first moment to be finite. We require the conditional expectation of the replacement matrix given the past to be close to an irreducible matrix, in {an} appropriate sense. We do not require any of the matrices to be balanced or nonrandom. {We prove convergence of the proportion vector, the composition vector and the count vector in $L^1$, and hence in probability.} It is to be noted that the related differential equation is of Lotka-Volterra type and can be analyzed directly.
\end{abstract}

\begin{keyword}[class=MSC]
\kwd[Primary ]{62L20}
\kwd[; secondary ]{60F15}
\kwd{60G42}
\end{keyword}

\begin{keyword}
\kwd{urn model}
\kwd{random replacement matrix}
\kwd{balanced replacement matrix}
\kwd{irreducibility}
\kwd{stochastic approximation}
\kwd{random step size}
\kwd{random drift}
\kwd{uniform integrability}
\kwd{Lotka-Volterra differential equation}
\end{keyword}

\end{frontmatter}

\section{Introduction} \label{sec: intro}

In this article, we apply the method of stochastic approximation to prove convergence results in a generalized urn model. We begin with brief reviews of both stochastic approximation and urn model. Stochastic approximation is a useful technique to analyze various stochastic models for more than half a century now. We prove a significant generalization of the technique by allowing the step sizes and the drift to be random. Urn model is also a very well studied stochastic model for nearly a century. We use the extension of stochastic approximation technique, as proved by us in this article, to study the asymptotic behaviour of a generalization of urn models with random replacement matrices.

\begin{subsection}{Stochastic approximation}
Broadly speaking, stochastic approximation is a method of finding root(s) of an unknown function. The function may be the output of some experiment or procedure where we can observe its value at certain points with errors but it is not possible to know the function explicitly. Of course there are plenty of recursive algorithms which address this problem. But stochastic approximation is particularly useful when the value of the function can only be observed with some random error. The method was introduced by \citet{RobbinsMonro}. Essentially their algorithm is as follows. Suppose the function under consideration is a continuous function $h : \RR^K \to \RR^K$ which we call the \textit{drift} following \citet{Borkar}. We want to construct a sequence $\Lp \x_n \Rp_{n=0}^\infty$ iteratively which will converge to a root of $h$. Suppose we can only observe $h \Lp \x_n \Rp + \bbeta_n$ where $\bbeta_n$ is a random error term. Naturally we want to assume that $\bbeta_n$ does not add any significant deviation on an average, given the past $\LP \x_m : m < n \RP$, i.e., $\Lp \bbeta_n \Rp_{n=0}^\infty$ is a martingale difference sequence. Then a sequence of positive real numbers $\Lp a_n \Rp_{n=0}^\infty$ is chosen and the sequence $\Lp \x_n \Rp$ is generated via the recursion
\[
\x_{n+1} = \x_n + a_n \Lp h(\x_n) + \bbeta_n \Rp.
\]
The ingenuity of stochastic approximation is in choosing $\Lp a_n \Rp_{n=0}^\infty $ such that $a_n\to 0$, but $\sum_n a_n = \infty$ and $\sum_n a_n^2 < \infty$. It is intuitively clear that the sequence can not get stuck at a point which is not a root of $h$ since $\sum_n a_n = \infty$. Also since $\sum_n a_n^2 < \infty$, the series $\sum_n a_n\bbeta_n$ is convergent under mild assumptions. In dimension one, i.e., $K=1$, it is easy to prove that the sequence $\Lp \x_n \Rp$ must converge to the zero-set of $h$ if it is non-empty. In dimension more than one, the situation is trickier since, loosely speaking, there are more ways to escape the zero-set. Of course it is indeed not true in general that the sequence converges to the zero-set of $h$.

In higher dimension, the most popular technique for analysing the asymptotic behaviour of the sequence $\Lp \x_n \Rp_{n=0}^\infty$ is the so called ODE method. We shall follow this method inspired by \citet{KushnerClark}. Let $\pt_n := \sum_{m=0}^{n-1} a_m$ for all $n>1$ and $\pt_0:=0$. We see that the evolution equation can be written as $\x_{n+1} - \x_n = ( \pt_{n+1} - \pt_n ) ( h(\x_n) + \bbeta_n )$ which can be considered as a discrete approximation to the ODE $\dot{\x} = h(\x)$. We consider the piecewise linear interpolation $\X$ of the sequence $\Lp \Lp \pt_n, \x_n \Rp \Rp_{n=0}^\infty$ and compare it to the solutions of the above ODE. In particular, if the solution the ODE is unique and is a constant function, then $\x_n$ will converge to the constant, which will also be the zero of the function $h$.

Instead of considering the ODE, the set $A = \cap_t \Bar{ \LP \X(s) : s\geq t \RP }$ may be considered and derive various properties of the set $A$ which then can be used to obtain asymptotic properties of $\Lp \Lp\pt_n, \x_n \Rp \Rp_{n=0}^\infty$. This is usually called the dynamical system approach. We refer the reader to \citet{benaim} and \citet{Borkar}.

In much of the literature, the step size $\Lp a_n \Rp$ and the drift $h$ are taken to be deterministic. We allow both of them to be random and prove version of stochastic approximation where the bounded sequence $(\x_n)$ being approximated converges in probability (actually in $L^1$) under suitable set of assumptions.
\end{subsection}

\begin{subsection}{Urn model}
Urn model is one of the simplest and most useful models considered in probability theory. Since its introduction by \cite{PolyaEggenberger}, the model has seen various extensions and generalizations. However, the basic framework remains the same. We start with an urn with balls of different colors. We shall allow finitely many $K$ colors. We are interested in the composition of the urn after different trials, or equivalently, the number of balls of different colors after the trials. We shall denote the composition vector after $n$-th trial as $\C_n=\Lp C_{n1},\dots,C_{nK}\Rp$, {where $\C_{ni}$ is the count of $i$-th color after $n$-th trial.} Speaking of the number of the balls, it seems natural to consider $\C_n$ as a vector of nonnegative integers. However, as we shall see, the assumption of integer values is not very important, and we shall allow each of $C_{ni}$ to take nonnegative real values. (Traditionally, the color is chosen by drawing a ball at random, noting its color and returning the ball, leading to the importance of the ``number" of each color.) At each trial, a color is chosen at random, namely, given the present composition, the conditional probability of choosing a color will be proportional to the share of the color in the composition. The total content after $n$-th trial is {$S_{n}=\sum_i C_{ni}$.} At every trial $n$, we have a $K\times K$ replacement matrix $\R_n$ with nonnegative entries, which are possibly random. If a ball of color $i$ is drawn then for each color $j$, $R_{nij}$ balls of color $j$ are added to the urn. The main question of interest in urn models is the asymptotic behavior of the color proportion $\csl{n}$ and the growth rate of the composition vector $\C_n$. We shall also consider the count vector $\N_n$, \label{def N} which counts the number of times each color is drawn, and study its growth rate.

\cite{PolyaEggenberger} and \cite{Polya} considered a fixed deterministic sequence of replacement matrix of the form $\lambda\I$ for some $\lambda>0$. The corresponding urn model is known as \textit{Polya Urn}. Here, the proportion vector $(\csl{n})$ converges almost surely to a random vector jointly distributed as Dirichlet with parameter $\lambda^{-1}\csl{0}$, cf.\ \citet{Freedman}. Since we add same number of balls at each trial, $S_n$ and $\C_n$ both grows linearly. This has been generalized in \textit{Friedman urn}, where {$K=2$ and} the replacement matrix sequence is again fixed and deterministic of the form $\scriptsize{\begin{pmatrix}\alpha\;&\beta\\
\beta\;&\alpha\end{pmatrix}}$. When $\beta\neq 0$, \cite{Freedman} proved, using martingale techniques, that the proportion vector $\csl{n}$ converge almost surely to $(1/2,1/2)$. To understand the difference in behavior of these two models we need the concept of irreducibility.

Recall that a square matrix $\b{A}$ with nonnegative entries is called \textit{irreducible} if, for any $i,j$ there exists a positive integer $N\equiv N_{ij}$ such that $A^{(N)}_{ij} >0$, where $A^{(N)}_{ij}$ denotes the $(i,j)$-th entry of $\b{A}^N$. By Perron Frobenius theory the eigenvalue of an irreducible matrix having largest real part is simple and positive. This eigenvalue is called the \textit{dominant eigenvalue}. If the matrix is \textit{balanced}, namely, all the row sums are equal then the dominant eigenvalue is the common row sum. Otherwise, the dominant eigenvalue lies strictly between the largest and the smallest row sums. Also corresponding to the dominant eigenvalue, there exists left and right eigenvectors which have all coordinates positive. The unique probability vector that is a left eigenvector of the dominant eigenvalue will be referred to as the \textit{stationary distribution}. For more details, see, for example, Chapter~1.3 of \cite{Seneta}.

When each $\R_n$ is a fixed deterministic balanced irreducible replacement matrix, \cite{Gouet} proved, again using martingale techniques, that the proportion vector $(\csl{n})$ converges almost surely to the stationary distribution of the replacement matrix. The case of fixed deterministic balanced, but not necessarily irreducible, replacement matrix was considered in \cite{DasguptaMaulik}, where growth rate of each coordinate of the composition vector was determined. \cite{AthreyaKarlin} considered the case where $(\R_n)$'s are i.i.d.\ with integer coefficients and $\Exp(\R_n)$ is irreducible. Then, by embedding into a continuous time multitype branching process, under the condition that $\Exp(R_{nij}\log R_{nij})<\infty$ for all $i,j$, they showed that $(\csl{n})$ converges almost surely to the stationary distribution of $\Exp(\R_1)$. See also \cite{AthreyaNey}.

However, in many applications it is too restrictive to assume deterministic or i.i.d.\ sequence of replacement matrices. One such application is that of adaptive designs used in clinical trials, as considered by, e.g., \cite{BaiHu}, \cite{LaruellePages} or \cite{Zhang}. They allowed the replacement matrix to depend on the choice of the color drawn in the entire past. They also assumed the conditional expectation of the replacement matrix given the entire past to be close to a matrix $\H$ in some appropriate norm. In literature, $\H$ is assumed to be deterministic, but using stochastic approximation with random step sizes and drift, we can allow $\H$ to be random. Such models have been analyzed in literature depending on moment conditions on the replacement matrix. For example, assuming that, for some $\delta>0$, the entries of $\R_n$ have conditional $(2+\delta)$ moments bounded, and $\H$ is deterministic and balanced, \cite{BaiHu} proved, using martingale techniques, $\C_n$ grows linearly. \cite{LaruellePages} still considered deterministic and balanced limit $\H$, but required only second conditional moment of $\R_n$ to be bounded. Under these hypotheses, they again proved linear growth of $\C_n$, using stochastic approximation algorithm. \cite{Zhang} removed the balanced assumption and considered the case where $\H$ can be unbalanced but nonrandom, again using stochastic approximation techniques and obtained the same convergence result under same bounded conditional second moment assumption as in \cite{LaruellePages}. All these articles further investigated Central Limit Theorem behavior and it was thus natural to assume the existence of the second moments. However, to study Law of Large Number type growth behavior of $\C_n$, it should be enough to assume the existence of the first moment alone. It may be noted here that, while \cite{AthreyaKarlin} obtained almost sure convergence in the case of i.i.d.\ integer valued replacement matrices under so called $L \log L$ condition, \cite{Zhang12} used stochastic approximation methods to prove almost sure convergence of linearly scaled configuration vector in the setup of \cite{BaiHu} only under $L (\log L)^p$ condition, for some $p>1$. \cite{Zhang12} also proved the result under $L \log L$ condition, but only for i.i.d.\ (not necessarily integer valued) replacement matrices.

We analyze this problem using stochastic approximation with random step size sequence $(1/S_{n+1})_n$ unlike the deterministic one $(1/(n+1))_n$, used in \cite{LaruellePages} or \cite{Zhang}. While this requires more careful and subtle analysis of the step size, the associated differential equation becomes easier to handle. The differential equation is a first order quadratic one, more precisely, of Lotka-Volterra type, and can be studied more explicitly than in \cite{Zhang}. The differential equations in our analysis, as well as that of \citeauthor{Zhang}, reduce to that of \citeauthor{LaruellePages}, when $\H$ is balanced. Due to our choice of step size sequences, we primarily study the proportion vector $\C_n/S_n$ instead of the linearly scaled composition vector $\C_n$ in \cite{Zhang}. We derive the linear scaling of the composition vector and the count vector from the behavior of the proportion vector. We assume existence of only the first moments for the replacement matrices and show $L^1$ and hence in probability convergence.
\end{subsection}

In Section~\ref{sec: sa}, we state and prove the result on stochastic approximation with random step size as well as drift. Theorem~\ref{thm: SA weak}, {is} among the main contributions of this article. In Section~\ref{sec: lv}, we rewrite the evolution equation of urn model in a form appropriate for stochastic approximation. We then study the related Lotka-Volterra differential equation in Proposition~\ref{Lotka} and show uniqueness of its probability solution. The analysis of this ODE is strikingly simpler and straight forward in comparison to the ODE considered by \cite{Zhang}. We also collect some useful results and prove required conditions on the step size sequence. Finally, in Section~\ref{sec: main}, we analyze the error terms and the main convergence results are stated and proved. All the vectors considered are row vectors, while for a row vector $\x$, its transpose will be denoted by $\x^T$. The vector with all components $1$ will be denoted by $\1$, while the dimension will be clear from the context. Also, for an event $A$, we shall denote its indicator function by $\chi(A)$.

In summary, this article makes three important contributions, namely, an extension to stochastic approximation algorithm for bounded sequence, when step size as well as drift are random; rewriting the stochastic approximation for an urn model with unbalanced random replacement matrix in a fashion so that the corresponding differential equation is of Lotka-Volterra type and can be analyzed directly through a simple change of variable; and finally a complete analysis of urn models with balls of finitely many colors and random replacement matrix, with only finite first moment condition.

\section{Stochastic Approximation with Random Step Size and Drift} \label{sec: sa}

In this Section we state and prove one of the main results of this article regarding stochastic approximation with random step size $(a_n)_{n=0}^\infty$, satisfying the usual conditions, namely, $a_n> 0$, $a_n\to 0$ and $\sum_n a_n = \infty$ almost surely, and a random drift $h$, which will depend on a Polish space $\mathfrak G$ valued argument $G$ as well. We shall denote the function as $(G, \x) \in \mathfrak G \times \RR^K \mapsto h_G(\x) \in \RR^K$, which is jointly continuous in $(G, \x)$. Let $(\X_n)_{n=0}^\infty$ be a bounded sequence of random variables in $\RR^K$, which lie in a (fixed) bounded and (without loss of generality) closed and convex subset $S$ of $\RR^K$. In fact, it is enough to define the drift $h$ on $G\times S$ only. The error sequence $(\bbeta_n)_{n=0}^\infty$  are also $\RR^K$-valued random variable, which, together with a $\mathfrak G$-valued random variable $G$ is defined on the same probability space as the sequences $(a_n)$ and $(\X_n)$. The sequence $(\X_n)$ will satisfy the stochastic approximation equation
$$\X_{n+1} = \X_n + a_n h_G (\X_n) + a_n \bbeta_n.$$

\bark \label{rem: mble}
Observe that $\sup_{\x\in S} h_G(\x)$ is measurable. Further, for each $G$, $h_G$ is continuous in $\x$. Hence, $S$ being closed and bounded, we have $\sup_{\x\in S} h_G(\x)$ finite for each fixed $G$ and hence it is a finite random variable.
\eark

The model and the assumptions (including those involving the error sequence, cf.~Subsection~\ref{subsec: assmp}) are motivated by \citet{KushnerClark}. \citet{KushnerClark} also provides an analogous result for convergence in probability, when the step sizes and the drift are deterministic. However, we substantially improve the result in Theorem~\ref{thm: SA weak} by allowing the step sizes and the drift to be random. This is the main contribution of this section, as well as one of the main contributions of this article. It is also significantly used later in the application to urn models.

\subsection{Linear interpolates}
To study the stochastic approximation we consider the piecewise linear interpolations of the sequence $(\X_n)$, taking values in $\RR^K$. Let $\pt_0:=0$ and $\pt_n := \sum_{i=0}^{n-1} a_i$ be the partial sum sequence. The continuous piecewise linear interpolation $\X^0$ of $((\pt_n, \X_n))_{n=0}^\infty$ is given by
$$\X^0(t) := \begin{cases}
\X_n\frac{t_{n+1}-t}{a_n}+\X_{n+1}\frac{t-\pt_n}{a_n} &\text{if } t\in\LT\pt_n,\pt_{n+1}\RT\\
\X_0 &\text{ if } t\leq t_0=0.
\end{cases}$$
Similarly define the piecewise constant interpolation $\Xb^0$ of $((\pt_n, \X_n))_{n=0}^\infty$, as
$$\Xb^0(t) := \begin{cases}
\X_n &\text{ if } t\in\LT\pt_n,\pt_{n+1}\Rp\\
\0 &\text{ if } t<t_0=0.
\end{cases}$$
Next, $\X^n$ and $\Xb^n$ are the corresponding translates given by $\X^n(t) := \X^0 (t_n+t)$ and $\Xb^n(t) := \Xb^0 (t_n+t)$. Since $S$ is convex, observe that $\X^0(t)$ lies in $S$ for all $t$, and so happens for $\X^n(t)$ too for all $n$ and $t$.

The partial sums of the error terms in the stochastic approximation equation will be denoted by $\bb_0:=\0$ and for $n>0$, $\bb_n:=\sum_{m=0}^{n-1}a_m\bbeta_m$. The continuous piecewise linear interpolation $\bb^0$ of $((\pt_n, \bb_n))_{n=0}^\infty$ is given by
$$\bb^0(t) := \begin{cases}
\bb_n\frac{t_{n+1}-t}{a_n}+\bb_{n+1}\frac{t-\pt_n}{a_n} &\text{ if } t\in\LT\pt_n,\pt_{n+1}\RT\\
\bb_0=\0 &\text{ if } t\leq t_0=0.
\end{cases}$$
Its translates will be $\bb^n(t):=\bb^0\Lp t+\pt_n\Rp-\bb^0\Lp\pt_n\Rp=\bb^0\Lp t+\pt_n\Rp-\bb_n$.

Finally define
$$\e^n\Lp t\Rp:=\int_0^t h_G\Lp\Xb^n(s)\Rp ds - \int_0^t h_G\Lp\X^n\Lp s\Rp\Rp ds.$$

\subsection{Level crossing times}
The linear interpolations above can be written in terms of level crossing times. For $n>0$ and $t>0$, define the forward and backward level crossing times as:
\begin{align}
  \ST{n}{t} &:=\max\LP m\ge n:a_n+\cdots+a_{m-1}\le t\RP \label{def: tau up}\\
  \text{and } \STb{n}{t} &:=\min\LP 0\leq m\le n-1: a_{m+1}+\cdots+a_{n-1}\leq t\RP, \label{eq: tau down}
\end{align}
where empty sums are taken to be zero. The following properties of $\ST{n}{t}$ and $\STb{n}{t}$ are easy to establish:
\begin{lemma} \label{lem: prop tau}
The following statements about $\ST{n}{t}$ and $\STb{n}{t}$ hold on a set of probability $1$:
\bei
\ii For all $n>0$, $t>0$, $\ST{n}{t}$ takes values in $\LP n,n+1,\dots\RP$. On the event $\LT a_n \le t \RT$, $\ST{n}{t}> n$.
\ii For all $n>0$, $t>0$, $\STb{n}{t}$ takes values in $\LP 0,1,\dots,n-1\RP$. On the event $\LT a_{n-1} \le t \RT$, $\STb{n}{t}<n-1$.
\ii On the event $\LT a_n \le t \RT$, $a_n + \cdots + a_{\ST{n}{t}-1} \leq t$, but $a_n + \cdots + a_{\ST{n}{t}} > t$. Further on this event,
$$\ST{n}{t} =\max\LP m> n:a_n+\cdots+a_{m-1}\le t\RP>n.$$
\ii On the event $\LT a_{n-1} \le t \RT$, $a_{\STb{n}{t}+1} + \cdots + a_{n-1} \leq t$ and
$$\STb{n}{t} =\min\LP 0\leq m<n-1: a_{m+1}+\cdots+a_{n-1}\leq t\RP < n-1.$$ On the event $\LT t<\pt_n\RT$, $a_{\STb{n}{t}} + \cdots + a_{n-1} > t$.
\ii For all $n>0$, $t>0$, $\ST{n}{t}=\max\LP m\geq n:\pt_m\leq \pt_n+t\RP$. \label{tau up}
\ii On the event $\LT \pt_n>t\RT$, $\STb{n}{t}=\max\LP 0\leq m\leq n-1:\pt_m<\pt_n-t\RP$. On the event $\LT\pt_n\leq t\RT$, $\STb{n}{t}=0$. \label{tau dn}
\ii For all $t>0$, as $n\to\infty$, we have $\ST{n}{t}\uparrow\infty$ and $\STb{n}{t}\uparrow\infty$.
\ii For all $t>0$, eventually for large $n$, we have $\ST{n}{t}<\ST{n}{2t}$ and $\STb{n}{t}>\STb{n}{2t}$. \label{scale}
\ee
\end{lemma}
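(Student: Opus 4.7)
The plan is to fix a sample path on which the almost-sure hypotheses $a_n > 0$, $a_n \to 0$, and $\sum_n a_n = \infty$ all hold, and argue deterministically by translating every assertion into a statement about the strictly increasing partial-sum sequence $(\pt_m)$ via the identity $a_k + \cdots + a_{l-1} = \pt_l - \pt_k$. Parts (i)--(iv) are then essentially bookkeeping: the empty-sum convention makes $m = n$ admissible in the set defining $\ST{n}{t}$ and $m = n-1$ admissible in the set defining $\STb{n}{t}$, giving the range claims; on the events $\{a_n \le t\}$ and $\{a_{n-1} \le t\}$ the respective neighboring indices are also admissible, forcing the strict inequalities; parts (iii) and (iv) then just rephrase the max/min in a form that discards the empty-sum case.

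For (v), the translation is immediate and gives $\ST{n}{t} = \max\{m \ge n : \pt_m \le \pt_n + t\}$. For (vi), strict monotonicity of $(\pt_m)$ shows that the defining set for $\STb{n}{t}$ is an upper segment of $\{0, 1, \ldots, n-1\}$, so its minimum $m_0$ is characterized by $\pt_{m_0} < \pt_n - t \le \pt_{m_0+1}$; this identifies it with $\max\{m \ge 0 : \pt_m < \pt_n - t\}$ on the event $\{\pt_n > t\}$ (where the set is nonempty since $\pt_0 = 0$), while on $\{\pt_n \le t\}$ every index satisfies $\pt_{m+1} \ge 0 \ge \pt_n - t$, so the minimum is $0$. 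Part (vii) then combines $\ST{n}{t} \ge n \to \infty$ with the fact that $\sum_m a_m = \infty$ forces $\pt_n \to \infty$, so any fixed $M$ satisfies $\pt_M < \pt_n - t$ eventually, giving $\STb{n}{t} \ge M$ by (vi); monotonicity in $n$ comes from the inclusion of the defining sets as $\pt_n$ grows.

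The main obstacle is (viii), which uses $a_n \to 0$ in an essential way. For the forward direction, combining (iii) and (vii) gives $\pt_{\ST{n}{t}+1} = \pt_{\ST{n}{t}} + a_{\ST{n}{t}} \le \pt_n + t + a_{\ST{n}{t}}$; since $\ST{n}{t} \to \infty$ we have $a_{\ST{n}{t}} \to 0$, so eventually $a_{\ST{n}{t}} < t$ and thus $\pt_{\ST{n}{t}+1} < \pt_n + 2t$, which by (v) places $\ST{n}{t}+1$ in the defining set for $\ST{n}{2t}$ and yields $\ST{n}{2t} > \ST{n}{t}$. The backward claim follows the same template using (iv), (vi), and $a_{\STb{n}{2t}} \to 0$: the chain $\pt_{\STb{n}{2t}+1} \le \pt_{\STb{n}{2t}} + a_{\STb{n}{2t}} < (\pt_n - 2t) + t = \pt_n - t$ makes $\STb{n}{2t}+1$ admissible for $\STb{n}{t}$, noting that $\STb{n}{2t}+1 \le n-1$ eventually by (ii) with $2t$ in place of $t$.
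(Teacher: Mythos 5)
Your proposal is correct and follows essentially the same route as the paper: the paper also treats (i)--(vi) as direct consequences of the definitions (equivalently, of the partial-sum reformulation), proves the divergence of $\STb{n}{t}$ from the bound $\pt_n \le \pt_{\STb{n}{t}+1}+t$ together with $\pt_n\to\infty$, and obtains (viii) from the observation that $a_{\ST{n}{t}}\le t$ (resp.\ $a_{\STb{n}{t}}\le t$) eventually, since $a_n\to 0$ and the crossing times diverge. Your direct admissibility argument for (viii) is just the contrapositive of the paper's remark that equality of the crossing times would force $a_{\ST{n}{t}}>t$, so no substantive difference.
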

\begin{proof}
Proofs of the statements (i)-(vi) are easy and follow from the definition of $\ST{n}{t}$ and $\STb{n}{t}$. So is the first half of the statement (vii).

Since $a_n\to 0$, for each $t$, the event $[a_{n-1}\leq t]$ holds eventually and from \emph{(iv)}, we have $\pt_n \leq \pt_{\STb{n}{t}+1} + t$. Hence, if $\STb{n}{t}$ is bounded, so is $\pt_n$, contradicting $\pt_n\to\infty$, which gives the remainder of the statement (vii).

For the statement (viii), observe that, if $\ST{n}{t} = \ST{n}{2t}$, then $a_{\ST{n}{t}}> t$. Hence on the event $[a_{\ST{n}{t}}\le t]$, we have $\ST{n}{t} < \ST{n}{2t}$. The event happens all but finitely often with probability $1$, since $a_n\to 0$ and $\ST{n}{t}\to\infty$. Similarly the other strict inequality holds on the event {$[a_{\STb{n}{t}}\le t]$}.
\end{proof}

Using the level crossing times, we can establish integral equations for the linear interpolations.
\bl \label{lem: int eqn}
The following relations hold for all $n\ge 0$,
\begin{align}
\X^0\Lp t\Rp&=\X^0\Lp 0\Rp + \int_0^t h_G\Lp\Xb^0\Lp s\Rp\Rp ds + \bb^0(t) && \mbox{for } t\geq 0,\nonumber\\
\X^n\Lp t\Rp&=\X^n\Lp 0\Rp+\int_0^t h_G\Lp\Xb^0\Lp\pt_n+s\Rp\Rp ds + \bb^n(t) && \mbox{for } t\geq -\pt_n,\nonumber\\
\X^n\Lp t\Rp&=\X^n\Lp 0\Rp+\int_0^t h_G\Lp\X^n\Lp s\Rp\Rp ds+\bb^n(t)+\e^n(t) && \mbox{for } t\geq -\pt_n. \label{eq: int eqn old}
\end{align}
\el
\begin{proof}
The linear interpolation functions have simplified definitions in terms of the level crossing times. In particular, from Lemma~\ref{lem: prop tau}~\emph{\eqref{tau up}} and~\emph{\eqref{tau dn}}, we have $\pt_{\ST{n}{t}} \le \pt_n+t < \pt_{\ST{n}{t}+1}$ and on {$[-\pt_n<t<0]$, $\pt_{\STb{n}{-t}} < \pt_n+t \le \pt_{\STb{n}{-t}+1}$.} Using them, we have
\begin{align}
  \X^n(t) &= \begin{cases}
               \X_{\ST{n}{t}} + a_{\ST{n}{t}} \times &\\
               \,\, \Lp h_G\Lp\X_{\ST{n}{t}}\Rp + \bbeta_{\ST{n}{t}} \Rp \frac{(\pt_{n} + t) - \pt_{\ST{n}{t}}}{a_{\ST{n}{t}}}, & \mbox{if } t\geq 0, \\
               \X_{\STb{n}{-t}} + a_{\STb{n}{-t}} \times &\\
               \,\, \Lp h_G\Lp\X_{\STb{n}{-t}}\Rp + \bbeta_{\STb{n}{-t}} \Rp \frac{(\pt_{n} + t) - \pt_{\STb{n}{-t}}}{a_{\STb{n}{-t}}}, & \mbox{if } -\pt_n< t< 0, \\
               \X_0, & \mbox{if } t\leq -\pt_n;
             \end{cases} \label{eq: X}\\
  \Xb^n(t) &= \begin{cases}
               \X_{\ST{n}{t}}, & \mbox{if } t\geq 0, \\
               \X_{\STb{n}{-t}}, & \mbox{if } -\pt_n < t< 0, \\
               0, & \mbox{if } t\leq -\pt_n;
             \end{cases} \label{eq: Xb}\\
  \bb^n(t) &= \begin{cases}
               \sum_{m=n}^{\ST{n}{t}-1} a_m \bbeta_m + a_{\ST{n}{t}} \bbeta_{\ST{n}{t}} \frac{(\pt_{n} + t) - \pt_{\ST{n}{t}}}{a_{\ST{n}{t}}}, & \mbox{if } t\geq 0, \\
               -\Big[ \sum_{m=\STb{n}{-t}+1}^{n-1} a_m \bbeta_m &\\
               \qquad \qquad + a_{\STb{n}{-t}} \bbeta_{\STb{n}{-t}} \frac{\pt_{\STb{n}{-t}+1} - (\pt_{n} + t)}{a_{\STb{n}{-t}}} \Big], & \mbox{if } -\pt_n< t< 0, \\
               -\bb_n, & \mbox{if } t\leq-\pt_n. \nonumber
             \end{cases}
\end{align}

The results follow from $\X_{l} = \X_n + \sum_{m=n}^{l-1} a_m h_G (\X_m) + \sum_{m=n}^{l-1} a_m \bbeta_m$.
\end{proof}

\subsection{Assumptions on delayed sums} \label{subsec: assmp}
To study uniform convergence of $(\bb_n)$ on compacta, we need to consider, for all $T>0$,
$$\sup_{-T\leq t\leq T} \Lone{\bb^n(t)} \leq \sup_{0\leq t\leq T} \Lone{\bb^n(-t)} + \sup_{0\leq t\leq T} \Lone{\bb^n(t)}.$$
Further, using $\STb{n}{-t}=0$ on $[\pt_n+t\le 0]$ from Lemma~\ref{lem: prop tau}~\emph{\eqref{tau dn}}, we have
\begin{equation} \label{eq: B}
 \bb^n(t) = \begin{cases}
               \sum_{m=n}^{\ST{n}{t}-1} a_m \bbeta_m \frac{\pt_{\ST{n}{t}+1}-(\pt_{n} + t)}{a_{\ST{n}{t}}} + &\\
               \qquad \qquad \sum_{m=n}^{\ST{n}{t}} a_m \bbeta_m \frac{(\pt_{n} + t) - \pt_{\ST{n}{t}}}{a_{\ST{n}{t}}}, & \mbox{if } t\geq 0, \\
               -\Bigg[ \sum_{m=\STb{n}{-t}+1}^{n-1} a_m \bbeta_m \frac{(\pt_{n} + t)-\pt_{\STb{n}{-t}}}{a_{\STb{n}{-t}}} + &\\
               \qquad \qquad \sum_{m=\STb{n}{-t}}^{n-1} a_m \bbeta_m \frac{\pt_{\STb{n}{-t}+1} - (\pt_{n} + t)}{a_{\STb{n}{-t}}} \Bigg], & \mbox{if } -\pt_n< t< 0, \\
               -\sum_{m=\STb{n}{-t}}^{n-1} a_m \bbeta_m, & \mbox{if } t\leq-\pt_n.
             \end{cases}
\end{equation}
Hence, we need to assume negligibility of the forward and backward delayed sums, $\sum_{i=n}^{m} a_i\bbeta_i$ for $n\le m\le \ST{n}{T}$ and $\sum_{i=m}^{n-1} a_i\bbeta_i$ for $\STb{n}{T}\le m\le n-1$ respectively. For the deterministic (and hence the almost sure) case considered in \citet[A2.2.4]{KushnerClark} the negligibility assumption was made only on the forward delayed sums, from which the analogous condition on the backward delayed sums can be deduced. However, for convergence in probability, we need to assume negligibility of both the forward and backward delayed sums.
\bass For every $t>0$, the following holds:\label{WeakAssm}
\begin{align*}
\max_{n< m\le\ST{n}{t}}\Lone{\sum_{i=n}^{m-1} a_i\bbeta_i}&\to 0 \quad \text{in probability}
\intertext{and}
\max_{\STb{n}{t}\le m< n-1}\Lone{\sum_{i=m+1}^{n-1} a_i\bbeta_i}&\to 0 \quad \text{in probability}.
\end{align*}
\eass

For applications in the following sections, we shall check the following corollary, which gives conditions equivalent to the assumptions, for appropriate error terms.
\begin{lemma} \label{cor: neg}
Assumption~\ref{WeakAssm}, is equivalent to, for every $t>0$,
\begin{equation} \label{WeakAssm Alt}
\lim_{n\to\infty} \max_{n\leq m\leq \ST{n}{t}} \Lone{\sum_{i=n}^{m} a_i \bbeta_i} = 0 \,\, \text{ and } \,\, \lim_{n\to\infty} \max_{\STb{n}{t}\leq m\leq n-1} \Lone{\sum_{i=m}^{n-1} a_i \bbeta_i} = 0,
\end{equation}
in probability.
\end{lemma}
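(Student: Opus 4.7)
The plan is to observe that the families of partial sums appearing inside the two maxima differ only by a single boundary term, and then to absorb that extra term via the parameter-doubling trick provided by Lemma~\ref{lem: prop tau}~\emph{\eqref{scale}}.

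For the forward sums, after the reindexation $k=m-1$, the family appearing in Assumption~\ref{WeakAssm} equals $\{\sum_{i=n}^{k} a_i\bbeta_i : n\le k\le\ST{n}{t}-1\}$, while the one in \eqref{WeakAssm Alt} equals $\{\sum_{i=n}^{k} a_i\bbeta_i : n\le k\le\ST{n}{t}\}$. Thus the only new element is $\sum_{i=n}^{\ST{n}{t}} a_i\bbeta_i$. One direction is then immediate: if \eqref{WeakAssm Alt} holds, then the forward condition in Assumption~\ref{WeakAssm} holds, as the latter is a max over a subfamily. For the converse, by Lemma~\ref{lem: prop tau}~\emph{\eqref{scale}} we have $\ST{n}{t}+1 \le \ST{n}{2t}$ eventually in $n$, so the new boundary term equals $\Lone{\sum_{i=n}^{m-1} a_i\bbeta_i}$ with $m=\ST{n}{t}+1$ lying in the index range $n<m\le\ST{n}{2t}$; hence it is dominated by the forward max in Assumption~\ref{WeakAssm} applied with parameter $2t$, which converges to zero in probability.

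The backward direction is entirely parallel. After reindexing, the extra term in \eqref{WeakAssm Alt} is $\Lone{\sum_{i=\STb{n}{t}}^{n-1} a_i\bbeta_i}$, and by the other half of Lemma~\ref{lem: prop tau}~\emph{\eqref{scale}} we have $\STb{n}{t}-1 \ge \STb{n}{2t}$ eventually in $n$, so this equals $\Lone{\sum_{i=m+1}^{n-1} a_i\bbeta_i}$ for $m=\STb{n}{t}-1$, which lies inside the range of the backward max in Assumption~\ref{WeakAssm} with parameter $2t$. I do not foresee any serious obstacle; the only point requiring mild care is that Lemma~\ref{lem: prop tau}~\emph{\eqref{scale}} yields almost-sure eventual strict monotonicity rather than a deterministic bound, but this is entirely compatible with deducing a convergence-in-probability conclusion for each fixed $t>0$.
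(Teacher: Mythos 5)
Your proposal is correct and follows essentially the paper's own route: the trivial direction is the same, and the converse rests on the same $t\mapsto 2t$ doubling via Lemma~\ref{lem: prop tau}~\emph{\eqref{scale}} (the paper bounds the whole enlarged maximum by the Assumption~\ref{WeakAssm} maximum at parameter $2t$ on the almost-surely eventual event, whereas you isolate the single boundary term, a purely cosmetic difference).
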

\begin{proof}
The limits in~\eqref{WeakAssm Alt} trivially imply Assumption~\ref{WeakAssm}.

For converse, simply observe, on the event $[a_{\ST{n}{t}}\leq t]$, which, from Lemma~\ref{lem: prop tau}~\emph{\eqref{scale}}, happens all but finitely often with probability 1, we have
\begin{align*}
\max_{n\leq m\leq \ST{n}{t}} \Lone{\sum_{i=n}^{m} a_i \bbeta_i} &\le \max_{n\le m<\ST{n}{2t}}\Lone{\sum_{i=n}^m a_i\bbeta_i}\\
&= \max_{n< m\le\ST{n}{2t}}\Lone{\sum_{i=n}^{m-1} a_i\bbeta_i}.
\end{align*}
Similarly, on the event $[a_{\STb{n}{t}}\leq t]$, which, from Lemma~\ref{lem: prop tau}~\emph{\eqref{scale}}, also happens all but finitely often with probability 1, we have
\begin{align*}
\max_{\STb{n}{t}\leq m\leq n-1} \Lone{\sum_{i=m}^{n-1} a_i \bbeta_i} &\leq \max_{\STb{n}{2t}< m\le n-1}\Lone{\sum_{i=m}^{n-1} a_i\bbeta_i}\\
&= \max_{\STb{n}{2t}\le m< n-1}\Lone{\sum_{i=m+1}^{n-1} a_i\bbeta_i}.
\end{align*}
The result then follows from Assumption~\ref{WeakAssm}.
\end{proof}

\bark
The negligibility of the backward delayed sums in probability cannot be established from the negligibility of the forward delayed sums in probability, unlike in the case of almost sure or deterministic negligibility, and has to be assumed separately. This is due to the fact that for forward delayed sums the summation starts at a deterministic index and ends at a random one, while the situation is converse for the backward one, leading to the situation where the condition on the forward delayed sum cannot be changed to that for the backward delayed sum.

Assumption~\ref{WeakAssm} is motivated by those in \citet{KushnerClark}. We need to make assumptions on both forward and backward delayed sums unlike those for forward delayed sums alone given in \citet[A4.1.4]{KushnerClark}. In that case, the step sizes and hence the level crossing times are nonrandom. Thus, it is again possible to derive the negligibility in probability of the backward delayed sum in terms of that of the forward delayed sum.
\eark

\subsection{Negligibility of processes}
We are now ready to show the negligibility of the interpolates of the error terms.
\bp Under Assumption~\ref{WeakAssm}, the sequence $\Lp\bb^n\Rp_{n=0}^\infty$ converges to the constant $\0$ process in probability.\label{prop: B}\ep
\begin{proof}
Observe that it is enough to show that, for any $T>0$,
$$\sup_{-T\leq t\leq T} \Lone{\bb^n(t)}\to 0$$
in probability. Next observe that, using~\eqref{eq: B},
\begin{align*}
\sup_{-T\leq t\leq T} \Lone{\bb^n(t)} \leq & \sup_{0\leq t\leq T} \Lone{\bb^n(t)} + \sup_{-T\leq t\leq 0} \Lone{\bb^n(t)}\\
\leq & \max_{n\leq m\leq\ST{n}{T}}\Lone{\sum_{i=n}^m a_i\bbeta_i} + \max_{\STb{n}{T}\leq m\leq n-1}\Lone{\sum_{i=m}^{n-1} a_i\bbeta_i},
\end{align*}
which goes to $0$, using Lemma~\ref{cor: neg}.
\end{proof}

We next show that asymptotically the difference between the processes formed by the integrals $\Lp \int_{0}^{t} h_G(\X^n(u)) du \Rp_{t\in\RR}$ and $\Lp \int_{0}^{t} h_G(\Xb^n(u)) du \Rp_{t\in\RR}$ are negligible.
\bp Under Assumption~\ref{WeakAssm}, the sequence $\Lp\e^n\Rp_{n=0}^\infty$ converges to the constant $\0$ process in probability.\label{prop: e}\ep
\begin{proof}
{As in the proof of Proposition~\ref{prop: B}, we show $\sup_{0\leq t\leq T} \Lone{\e^n(t)}$ and $\sup_{-T\leq t\leq 0} \Lone{\e^n(t)}$ to be negligible, for any $T>0$, in probability.

Note that,
\begin{multline*}
  \sup_{0\leq t\leq T} \Lone{\X^n(t) - \Xb^n(t)}
  \leq \sup_{0\leq t\leq T} \Lone{a_{\ST{n}{t}} h_G(\X_{\ST{n}{t}}) + a_{\ST{n}{t}} \bbeta_{\ST{n}{t}}} \\
  \leq \sup_{\x\in S} h_G(\x) \max_{m\ge n} a_m + \sup_{0\leq t\leq T} \Lone{a_{\ST{n}{t}} \bbeta_{\ST{n}{t}}}.
\end{multline*}
The first term goes to zero almost surely, since, as observed in Remark~\ref{rem: mble}, $\sup_S h_G(\x)$ is a finite random variable, while $a_n\to 0$ almost surely. (Note that $\sup_S h_G(\x)$ is measurable, as noted in Remark~\ref{rem: mble}.) For the second term, observe that
$$a_{\ST{n}{t}} \bbeta_{\ST{n}{t}} = \sum_{i=n}^{\ST{n}{t}} a_i \bbeta_i - \sum_{i=n}^{\ST{n}{t}-1} a_i \bbeta_i$$
giving
$$\sup_{0\leq t\leq T} \Lone{a_{\ST{n}{t}} \bbeta_{\ST{n}{t}}} \leq 2 \max_{n\leq m\leq \ST{n}{T}} \Lone{\sum_{i=n}^{m} a_i \bbeta_i},$$
which converges to $0$ in probability using Assumption~\ref{WeakAssm}.

Next, observe that, the function $h_G$ is continuous for each fixed value of $G$ and hence it is uniformly continuous on the closed bounded set {$S$}. Hence, by negligibility of $\X^n(t) - \Xb^n(t)$ for any $t>0$, we have $h_G(\X^n(t)) - h_G(\Xb^n(t))\to\0$. Further, observe that, the processes $(\X^n(t), t\in\mathbb{R})_{n\ge0}$ and $(\Xb^n(t), t\in\mathbb{R})_{n\ge0}$ take values in the closed bounded set $S$ and hence $\sup_{0\le u\le T}\Lone{\Lp h_G (\X^n(u)) - h_G (\Xb^n(u)) \Rp}$ is bounded. Then, using Dominated Convergence Theorem, we have
\begin{align*}
  \sup_{0\leq t\leq T} \Lone{\e^n(t)} = & \sup_{0\leq t\leq T} \Lone{\int_0^t \Lp h_G (\X^n(u)) - h_G (\Xb^n(u)) \Rp du} \\
  \leq & \int_0^T \Lone{h_G(\X^n(u)) - h_G(\Xb^n(u))} du \to 0.
\end{align*}}

Similarly, on the event $[\pt_n>T]$, which happens all but finitely often with probability $1$, we have
\begin{multline*}
\sup_{0\leq t\leq T} \Lone{\X^n(-t) - \Xb^n(-t)}\\ \leq \sup_{\x\in S} h_G(\x) \max_{m\ge \STb{n}{T}} a_m + 2 \max_{n\leq m\leq \ST{n}{T}} \Lone{\sum_{i=n}^{m} a_i \bbeta_i}.
\end{multline*}
This again is negligible in probability using Assumption~\ref{WeakAssm} and arguing as before, $\sup_{0\leq t\leq T} \Lone{\e^n(-t)}$ is negligible.
\end{proof}

\subsection{Compactness and tightness}
To study the convergence of the random functions {$(\X^n)$}, we use the subsequential limits. Thus, the notion of tightness becomes important. The relevant results for tightness of the corresponding random functions are summarized below. For a discussion of similar results of continuous functions from $[0, \infty)$ to $\RR^K$, see \citet[Chapter~1.3]{StroockVaradhan}. Define the oscillation of a function $\x$ over an interval $[a, b] \subset \RR$ and $\delta>0$ as
$$\osc\Lp\x,[a,b],\delta\Rp:=\sup_{|t-s|\leq\delta, a\leq t<s\leq b}\Lone{\x(t)-\x(s)}.$$
The Arzela-Ascoli-type pre-compactness condition is given by
\begin{prop} \label{prop: cpct}
A sequence $\Lp\x^n\Rp_{n=0}^\infty$ in $C^K$, the space of $\RR^K$ valued continuous functions on $\RR$ endowed with topology of uniform convergence on compacta, is pre-compact iff
\bei
\ii for some (and, hence, for all) $t\in\RR$, $\sup_n \Lone{\x^n(t)}<\infty$ and
\ii for every $T>0$
\[
\lim_{\delta\to 0}\sup_n\osc\Lp\x^n,[-T,T],\delta\Rp=0.
\]
\ee
\end{prop}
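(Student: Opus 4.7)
The statement is the Arzela-Ascoli criterion for $C^K$ under the topology of uniform convergence on compacta, with domain $\RR$ rather than $[0,\infty)$. I would proceed by reducing to the classical Arzela-Ascoli theorem on each closed interval $[-T,T]$ and patching the pieces together via a diagonal extraction, using that this topology is metrizable (for example by $d(\x,\y)=\sum_{T=1}^\infty 2^{-T}(1\wedge\sup_{[-T,T]}\Lone{\x-\y})$), so that pre-compactness coincides with sequential pre-compactness and convergence amounts to uniform convergence on every $[-T,T]$.

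For the sufficiency direction, assume (i) and (ii). Fixing $T\in\NN$, I would first upgrade the pointwise bound (i) to a uniform bound $\sup_n\sup_{[-T,T]}\Lone{\x^n(t)}<\infty$ by choosing $\delta_0>0$ with $\sup_n\osc(\x^n,[-T,T],\delta_0)\le 1$ (possible by (ii)) and covering $[-T,T]$ by finitely many intervals of length $\delta_0$ anchored at a point $t_0$ where (i) gives a bound. Combined with the equicontinuity supplied directly by (ii), the classical Arzela-Ascoli theorem on $C([-T,T],\RR^K)$ yields a subsequence converging uniformly on $[-T,T]$. A standard diagonal argument over $T=1,2,3,\ldots$ then produces a single subsequence converging uniformly on every compact subset of $\RR$, i.e., in $C^K$.

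For the necessity direction, suppose the sequence is pre-compact. Since evaluation at any $t\in\RR$ is continuous on $C^K$, the set $\{\x^n(t)\}$ is pre-compact and hence bounded in $\RR^K$, proving (i) at every $t$ (and thus showing the equivalence of the "some" and "all" versions of (i)). To obtain (ii), I would argue by contradiction: if it failed, there would exist $T>0$, $\epsilon>0$, $\delta_k\downarrow 0$ and indices $n_k$ with $\osc(\x^{n_k},[-T,T],\delta_k)>\epsilon$. Pre-compactness yields a further subsequence converging uniformly on $[-T,T]$ to a continuous limit $\x^\infty$, whose uniform continuity on $[-T,T]$ forces $\osc(\x^\infty,[-T,T],\delta_k)\to 0$, and the triangle inequality transfers this bound back to $\x^{n_k}$ modulo $2\sup_{[-T,T]}\Lone{\x^{n_k}-\x^\infty}$, contradicting the choice of $n_k$ for large $k$.

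The result is classical and essentially the version for $[0,\infty)$ cited from \citet[Chapter~1.3]{StroockVaradhan}, so no genuine obstacle arises; the only mildly delicate point is organizing the diagonal extraction cleanly across $T\in\NN$ and verifying that uniform convergence on each $[-T,T]$ is the same as convergence in the metric $d$ above.
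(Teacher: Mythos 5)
Your proof is correct: it is the standard Arzel\`a--Ascoli argument (uniform boundedness on each $[-T,T]$ from (i) plus (ii), equicontinuity from (ii), then a diagonal extraction across $T\in\NN$, with the converse via continuity of evaluations and uniform continuity of subsequential limits). The paper itself does not prove this proposition but simply states it as a classical fact with a pointer to Stroock and Varadhan, Chapter~1.3, and your argument is essentially the proof that the cited reference (and the paper) has in mind.
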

The above result easily translates for tightness of $C^K$ valued random elements.
\begin{prop} \label{prop: tight}
Let $\Lp\X^n\Rp_{n=0}^\infty$ be a sequence of random elements in $C^K$. Then the sequence is tight iff
\bei
\ii for some (and, hence, for all) $t\in\RR$, $\Lp\X^n(t)\Rp_{n=0}^\infty$ is tight;
\ii and for every $T>0$ and $\epsilon>0$
\[
\lim_{\delta\to 0}\sup_n\Prob\Lp\osc\Lp\X^n,[-T,T],\delta\Rp>\epsilon\Rp=0.
\]
\ee
\end{prop}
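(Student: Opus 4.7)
The statement is the standard translation of Arzela–Ascoli compactness (Proposition~\ref{prop: cpct}) to tightness on the Polish space $C^K$ (equipped with uniform convergence on compacta), so the plan is to use Prokhorov's theorem together with Proposition~\ref{prop: cpct} in both directions.

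For the forward direction, assume (i) and (ii). Fix $\epsilon>0$. Using (i) at $t=0$, choose $M<\infty$ with $\sup_n \Prob(\Lone{\X^n(0)}>M)<\epsilon/2$. Using (ii), for each pair $(k,j)\in\NN\times\NN$ choose $\delta_{k,j}>0$ so that
\[
\sup_n \Prob\Lp\osc(\X^n,[-k,k],\delta_{k,j})>1/j\Rp < \epsilon/2^{k+j+1}.
\]
Set
\[
A := \LP \x\in C^K: \Lone{\x(0)}\leq M,\; \osc(\x,[-k,k],\delta_{k,j})\leq 1/j \text{ for all } k,j\geq 1\RP.
\]
Then $A$ satisfies both hypotheses of Proposition~\ref{prop: cpct} (the first because $\|\x(0)\|$ is uniformly bounded and one can bound $\|\x(t)\|$ on any compact via oscillation control, the second by construction), so $\Bar{A}$ is compact in $C^K$. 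A union bound gives $\Prob(\X^n\notin \Bar{A})\leq \epsilon/2+\sum_{k,j}\epsilon/2^{k+j+1}\leq \epsilon$ for every $n$, which is tightness.

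For the converse, assume tightness and fix $\epsilon>0$. Choose a compact $K_\epsilon\subset C^K$ with $\inf_n \Prob(\X^n\in K_\epsilon)\geq 1-\epsilon$. For (i), the evaluation map $\x\mapsto \x(t)$ is continuous on $C^K$, so its image $K_\epsilon(t)\subset \RR^K$ is compact and $\Prob(\X^n(t)\in K_\epsilon(t))\geq 1-\epsilon$, establishing tightness of $(\X^n(t))$. For (ii), apply Proposition~\ref{prop: cpct} to the pre-compact set $K_\epsilon$: for every $T>0$, $\lim_{\delta\to 0}\sup_{\x\in K_\epsilon}\osc(\x,[-T,T],\delta)=0$, so given $\eta>0$ we can pick $\delta_0>0$ with $\sup_{\x\in K_\epsilon}\osc(\x,[-T,T],\delta)\leq\eta$ for all $\delta\leq\delta_0$; then $\{\osc(\X^n,[-T,T],\delta)>\eta\}\subset\{\X^n\notin K_\epsilon\}$, so $\sup_n\Prob(\osc(\X^n,[-T,T],\delta)>\eta)\leq\epsilon$, and letting $\delta\to 0$ then $\epsilon\to 0$ gives (ii).

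The two routine points that require a small amount of care are: in the forward direction, verifying that $\Bar{A}$ (rather than just $A$) genuinely satisfies the Arzela–Ascoli hypotheses of Proposition~\ref{prop: cpct}, which follows because oscillation bounds and pointwise bounds are preserved under uniform convergence on compacta; and in both directions, the interchange between the ``for some'' and ``for all $t$'' formulations of condition (i), which follows once (ii) is in hand by bounding $\Lone{\X^n(t)}\leq\Lone{\X^n(0)}+\osc(\X^n,[-|t|,|t|],|t|)$. No step here is truly an obstacle; the argument is a soft functional-analytic one, and the main bookkeeping is making sure the countable collection of oscillation conditions in the definition of $A$ is summed with the right weights.
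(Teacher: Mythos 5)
Your argument is correct and is essentially the proof the paper leaves implicit: the paper gives no proof of Proposition~\ref{prop: tight}, simply asserting that Proposition~\ref{prop: cpct} ``easily translates'' to tightness (with a pointer to Stroock--Varadhan, Chapter~1.3), and your construction of a compact set from a pointwise bound plus countably many oscillation constraints, together with the converse via evaluation maps and Arzela--Ascoli applied to a compact set, is exactly that standard translation. The only step deserving one more line is the ``for some/for all $t$'' interchange, since the bound $\Lone{\X^n(t)}\leq\Lone{\X^n(0)}+\osc\Lp\X^n,[-|t|,|t|],|t|\Rp$ requires tightness of the oscillation at the fixed scale $|t|$, which follows from condition (ii) by the routine chaining bound $\osc\Lp\x,[-T,T],T\Rp\leq\lceil 2T/\delta\rceil\osc\Lp\x,[-T,T],\delta\Rp$.
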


Now it is easy to check the conditions for the sequence of functions $(\X^n)$.
\begin{prop}
{Under Assumption~\ref{WeakAssm}, the sequence $\Lp\X^n\Rp_{0}^\infty$ is tight.}\label{prop: X}
\end{prop}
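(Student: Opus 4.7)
The plan is to verify the two conditions of Proposition~\ref{prop: tight}. Condition~(i) is immediate at $t=0$: $\X^n(0)=\X_n$ takes values in the bounded set $S$, so $(\X^n(0))_{n=0}^\infty$ is tight.

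For condition~(ii), I would control the oscillation via the second integral equation of Lemma~\ref{lem: int eqn}. For any $n$ with $\pt_n>T$ and any $-T\le s<t\le T$, subtraction gives
\[
\X^n(t)-\X^n(s) = \int_s^t h_G\Lp\Xb^0(\pt_n+u)\Rp\,du + \Lp\bb^n(t)-\bb^n(s)\Rp.
\]
Since $\Xb^0$ takes values in $S$ and $\sup_{\x\in S}h_G(\x)$ is a finite random variable by Remark~\ref{rem: mble}, the integrand is uniformly bounded on $S$. With $t-s\le\delta$ this yields the path-wise bound
\[
\osc\Lp\X^n,[-T,T],\delta\Rp \le \delta \sup_{\x\in S} h_G(\x) + 2 \sup_{-T\le u\le T} \Lone{\bb^n(u)}.
\]

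Given $\epsilon,\eta>0$, I would then fix $M$ so that $\Prob\bigl(\sup_{\x\in S}h_G(\x)>M\bigr)<\eta/3$, choose $N$ large enough that, for every $n\ge N$, both $\Prob(\pt_n\le T)<\eta/3$ (using $\pt_n\to\infty$ almost surely) and $\Prob\bigl(2\sup_{-T\le u\le T}\Lone{\bb^n(u)}>\epsilon/2\bigr)<\eta/3$ (by Proposition~\ref{prop: B}), and take $\delta<\epsilon/(2M)$. For such $n$ and $\delta$, the union of the three events above covers $\{\osc(\X^n,[-T,T],\delta)>\epsilon\}$, so its probability is at most $\eta$. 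For the finitely many remaining indices $n<N$, each $\X^n$ is a fixed continuous random function whose oscillation on $[-T,T]$ tends to $0$ path-wise as $\delta\to 0$; shrinking $\delta$ further disposes of this finite head. Taking the supremum over $n$ and letting $\delta\downarrow 0$ yields condition~(ii) of Proposition~\ref{prop: tight}.

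The main obstacle is that the displayed bound is not uniform in $n$: both the tail $\pt_n$ and the maximal fluctuation $\sup_{-T\le u\le T}\Lone{\bb^n(u)}$ depend on $n$, and the uniformity demanded by $\sup_n\Prob(\cdot)$ in Proposition~\ref{prop: tight} must come from combining the almost sure divergence of $\pt_n$ with the in-probability negligibility of $\bb^n$ from Proposition~\ref{prop: B}. The splitting into a large-$n$ tail, handled by the integral estimate, and a small-$n$ head, handled by continuity of finitely many sample paths, is the only subtle ingredient; the rest is routine.
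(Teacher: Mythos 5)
Your proof is correct and follows essentially the same strategy as the paper: verify the two conditions of Proposition~\ref{prop: tight}, with condition (i) immediate from boundedness of $S$ and condition (ii) obtained by bounding the oscillation through an integral equation of Lemma~\ref{lem: int eqn}, using that $\sup_{\x\in S}h_G(\x)$ is a finite random variable plus the negligibility of the error interpolates. The only (harmless) differences are that you use the second integral equation, with $h_G\Lp\Xb^0(\pt_n+\cdot)\Rp$ in the integrand, so that only $\bb^n$ appears and Proposition~\ref{prop: e} is not needed, whereas the paper uses the third equation and controls both $\osc(\bb^n,\cdot,\cdot)$ and $\osc(\e^n,\cdot,\cdot)$ via Propositions~\ref{prop: B} and~\ref{prop: e} (their convergence in probability to $\0$ makes those sequences tight, which supplies the uniform-in-$n$ oscillation control); and that you handle the supremum over $n$ explicitly by splitting into a large-$n$ tail, where $\Prob(\pt_n\le T)$ and $\Prob\bigl(2\sup_{|u|\le T}\Lone{\bb^n(u)}>\epsilon/2\bigr)$ are small, and a finite head disposed of by path-wise uniform continuity. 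Your restriction to the event $[\pt_n>T]$ correctly avoids the values $\Xb^0(t)=\0\notin S$ for $t<0$, so the bound $\delta\sup_{\x\in S}h_G(\x)+2\sup_{|u|\le T}\Lone{\bb^n(u)}$ is legitimate where you use it, and the monotonicity of the oscillation in $\delta$ makes the final limit as $\delta\downarrow 0$ go through.
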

\begin{proof}
{Note that for any $t$, $\Lp\X^n(t)\Rp_n$ is a sequence of random variables taking values in the closed bounded, {and hence compact,} set $S$ and hence is tight. Next we need to consider the oscillations of $\Lp\X^n\Rp$. It is enough to consider oscillation of $\X^n$ only in the interval $[-\pt_n,\infty)$ because by definition $\X^n(t)=\X_0$ for all $t\leq-\pt_n$. From Lemma~\ref{lem: int eqn}, we know that for $t\geq-\pt_n$
$$\X^n(t) = \X^n\Lp 0\Rp+\int_0^t h_G\Lp\X^n(s)\Rp ds+\bb^n(t)+\e^n(t).$$

For the oscillation of the function in the second term observe that for $t,s\geq-\pt_n$
\begin{align*}
\Lone{\int_0^t h_G\Lp\X^n(u)\Rp du-\int_0^s h_G\Lp\X^n(u)\Rp du} &= \Lone{\int_s^t h_G\Lp\X^n(u)\Rp du}\\
&\leq \sup_{\x\in S} h_G(\x) |t-s|.
\end{align*}
The first term represents functions constant at $\X^n(0)$ which has no oscillation. Hence, we conclude
\begin{multline*}
\osc\Lp\X^n,[-T,T],\delta\Rp\\ \leq \delta \sup_S h_G(\x) + \osc\Lp\bb^n,[-T,T],\delta\Rp + \osc\Lp\e^n,[-T,T],\delta\Rp.
\end{multline*}
Applying Propositions~\ref{prop: B} and~\ref{prop: e} to Proposition~\ref{prop: tight}, the oscillations of $\bb^n$ and $\e^n$ are negligible. Hence the condition in Proposition~\ref{prop: tight}~(ii) follows, cf.\ Remark~\ref{rem: mble}.}
\end{proof}

\subsection{Main results}\label{subsec: result}
We are now ready to state the main theorems for stochastic approximation with random step size and drift.
\bark \label{rem: jt mble}
{Note that, $\Z_G$ will be the limit of $\X^n$ and hence a valid random function. However we have to further assume $\Z_G(0)$ is continuous in $G$.}
\eark

\begin{thm} \label{thm: SA weak}
Let $(\X_n)_{n=0}^\infty$ be a sequence of random variables taking values in a nonrandom closed convex bounded subset $S$ of $\RR^K$. Let $(a_n)_{n=0}^\infty$ be a random sequence of positive numbers satisfying $a_n\to 0$ and $\sum_n a_n = \infty$ almost surely.

Let $\mathfrak G$ be some Polish space and $(G, \x) \mapsto h_G(\x)$ be the drift from $\mathfrak G \times S$ to $\RR^K$, which is jointly continuous in $(G, \x)$.

The error sequence $(\bbeta_n)_{n=0}^\infty$  are also $\RR^K$-valued random variable, which, together with the $\mathfrak G$-valued random variable $G$ are defined on the same probability space as the sequences $(a_n)$ and $(\X_n)$. The sequences $(a_n)$ and $(\bbeta_n)$ satisfy Assumption~\ref{WeakAssm}. They are related through the stochastic approximation equation
$$\X_{n+1} = \X_n + a_n h_G (\X_n) + a_n \bbeta_n.$$

Assume that, for almost all values of $G$, the associated ODE $\dot \X = h_G(\X)$ has a unique solution $\Z_G$, such that $\Z_G(t)$ lies in $S$ for all $(G, t)$. {Also assume that, $\Z_G(0)$ is continuous in $G$.}

Then $\X_n \to \Z_G(0)$ holds in probability, as well as in $L^1$.
\end{thm}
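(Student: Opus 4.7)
The plan is to invoke the ODE method in a probabilistic (rather than almost sure) form. Proposition~\ref{prop: X} already provides tightness of the interpolates $(\X^n)$ in $C^K$; coupled with the (trivial) tightness of the fixed random element $G$ of the Polish space $\mathfrak G$, this gives joint tightness of $((\X^n,G))$ in $C^K\times\mathfrak G$. From an arbitrary subsequence I would extract, via Prokhorov's theorem, a further subsequence $(n_k)$ along which $(\X^{n_k},G)$ converges in distribution to some pair $(\X^*,G)$, and then by Skorokhod's representation theorem realize this convergence almost surely on a common probability space (preserving the law of $G$).

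Next I would pass to the limit in the integral equation~\eqref{eq: int eqn old},
$$\X^{n_k}(t)=\X^{n_k}(0)+\int_0^t h_G(\X^{n_k}(s))\,ds+\bb^{n_k}(t)+\e^{n_k}(t).$$
Propositions~\ref{prop: B} and~\ref{prop: e} force $\bb^{n_k},\e^{n_k}\to\0$ in probability (and hence almost surely along a further subsequence on the Skorokhod space). Joint continuity of $h_G(\x)$ in $(G,\x)$, together with the boundedness of $h_G$ on the compact set $S$ from Remark~\ref{rem: mble} and the Dominated Convergence Theorem, then yields
$$\X^*(t)=\X^*(0)+\int_0^t h_G(\X^*(s))\,ds\qquad\text{for every }t\in\RR,$$
with $\X^*(t)\in S$ since $S$ is closed. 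Because the domains $[-\pt_n,\infty)$ of $\X^n$ exhaust $\RR$ (as $\pt_n\to\infty$ almost surely), $\X^*$ is a solution of $\dot{\X}=h_G(\X)$ on all of $\RR$ taking values in $S$; the uniqueness hypothesis then forces $\X^*\equiv\Z_G$, and in particular $\X^*(0)=\Z_G(0)$.

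Since every subsequence of $((\X_n,G))=((\X^n(0),G))$ admits a sub-subsequence converging in distribution to $(\Z_G(0),G)$, the full sequence converges in distribution to $(\Z_G(0),G)$. The continuity of $G\mapsto\Z_G(0)$ assumed in Remark~\ref{rem: jt mble} makes the map $(\x,g)\mapsto\x-\Z_g(0)$ continuous, so the Continuous Mapping Theorem gives $\X_n-\Z_G(0)\to\0$ in distribution; the limit being a constant, this is convergence in probability. Finally, uniform boundedness of $(\X_n)$ inside $S$ upgrades convergence in probability to $L^1$ convergence via Bounded Convergence.

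The hard part will be the ODE identification step: justifying that the limit $\X^*$ satisfies the evolution equation on the \emph{entire} real line (requiring $\pt_n\to\infty$) and that the limit can be passed inside the integral uniformly for $t$ in compacta. This rests crucially on the joint continuity of $h_G(\x)$ in $(G,\x)$ and the compactness of $S$. Once $\X^*$ is pinned to $\Z_G$ by uniqueness, the remainder—converting subsequential distributional convergence to $L^1$ convergence of the original sequence—follows routinely from the continuity of $\Z_G(0)$ in $G$ and boundedness.
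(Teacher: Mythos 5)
Your proposal is correct and follows essentially the same route as the paper: tightness of the interpolates, Skorokhod representation along subsequences, negligibility of $\bb^n$ and $\e^n$, identification of the subsequential limit as the unique solution $\Z_G$ of the ODE, and the subsequence principle plus boundedness of $S$ to upgrade to convergence in probability and in $L^1$. The only cosmetic differences are that the paper carries $(\bb^n,\e^n)$ explicitly in the tight vector (which you should also do, or note that $\bb^{n}+\e^{n}$ is a continuous functional of $(\X^{n},G)$ on $[-\pt_n,\infty)$, so the integral equation transfers to the Skorokhod copies, with the domain issue handled via $\Prob(\pt_{n_k}\ge T)\to 1$), and that it exploits the continuity of $G\mapsto\Z_G(0)$ by pre-centering $\hX^n=\X^n-\Z_G(0)$ rather than through your final continuous-mapping step --- the substance is identical.
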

\begin{proof}
First define $\hX^n(t) := \X^n(t) - \Z_G(0)$ and
$$\hH(\x):= h_G\Lp\x + \Z_G(0)\Rp, \quad \text{{ for $(\x, G)$ satisfying $\x+\Z_G(0)\in S$}}.$$
Since, $h_G(\x)$ is jointly continuous in $(G, \x)$ and $\Z_G(0)$ is continuous in $G$, $\hH(\x)$ is also jointly continuous in $(G, \x)$. Further, from~\eqref{eq: int eqn old}, we have
\begin{equation} \label{eq: int eqn}
{\hX^n(t) = \hX^n(0) + \int_0^t \hH\Lp\hX^n(s)\Rp ds + \bb^n(t) + \e^n(t)\mbox{ for } t\geq -\pt_n.}
\end{equation}

Under Assumption~\ref{WeakAssm}, using Propositions~\ref{prop: B},~\ref{prop: e} and~\ref{prop: X}, each component of $\Lp \X^n, \bb^n, \e^n, G \Rp \in C^K \times C^K \times C^K \times \mathfrak G$ is tight, and hence so is $\Lp \X^n, \bb^n, \e^n, G \Rp$ jointly. Thus, for any weakly convergent subsequence of $\Lp \X^{n_k}, \bb^{n_k}, \e^{n_k}, G \Rp$, by Skorohod representation theorem get {$${ \Lp \tX^{k}, \tbb^{k}, \te^{k}, \tG^{k} \Rp} \to \Lp\tX,\tbb,\te,\tG\Rp \quad \text{almost surely},$$ with marginally for each $k$, $${ \Lp \tX^{k}, \tbb^{k}, \te^{n_k}, \tG^{k} \Rp} \stackrel{\mathrm{d}}{=} \Lp \X^{n_k}, \bb^{n_k}, \e^{n_k}, G\Rp,$$ where $\stackrel{\mathrm{d}}{=}$ denotes equality in distribution. {Note that the sequence of random vectors $\Lp \tX^{k}, \tbb^{k}, \te^{k}, \tG^{k} \Rp$ depends on the choice of the subsequence $(n_k)$.} Hence, by {Propositions~\ref{prop: B} and~\ref{prop: e}} we have $\tbb=\0$ and $\te=\0$. Also, defining ${\htX^{k}(t):=\tX^{k}(t)-\Z_{\tG^{k}}(0)}$, we have, by continuity of $\Z_G(0)$ in $G$, ${\htX^{k}} \to \htX := \tX - \Z_{\tG}(0)$.

Define $\ieq{\underline\x}(t) = \int_0^t \hH(\underline\x(u)) du$, where, for each fixed $G$, $\underline\x\in C^K$ actually takes values in $S-\Z_G(0)$. Since $S$, and hence $S-\Z_G(0)$, are closed, bounded for each $G$ and $\hH$ is continuous in $\x$, the values taken by $\hH(\x)$ are also bounded. Hence $\ieq{\underline\x}$ is $C^K$-valued. Also, $\hH$ being jointly continuous, $\ieq{\underline\x}$ is also jointly continuous in $(G, \underline\x)$. So,
\begin{multline*}
\Lp \htX^{k}(t) - \htX^{k}(0) - \b{I}_{\tG^{k}, \htX^{k}}(t) -\tbb^{k}(t) - \te^{k}(t) \Rp_{t\in\RR}\\ \stackrel{\mathrm{d}}{=} \Lp \hX^{n_k}(t) - \hX^{n_k}(0) - \ieq{\hX^{n_k}}(t) - \bb^{n_k}(t) - \e^{n_k}(t) \Rp_{t\in\RR}.
\end{multline*}
{Fix $T>0$. Now the right side vanishes for all $t\in[-T,T]$ on the event $[\pt_{n_k}\ge T]$, whose probability goes to $1$. Thus, for any $T>0$, the right side, and consequently, the left side converges to the constant zero function on the interval $[-T,T]$ in probability. Hence, the left side converges to the constant zero function as a process on the entire real line in probability.}
Since $\ieq{\underline\x}$ is jointly continuous in $(G, \underline\x)$,} taking limit, $\Lp \htX, \0, \0, \tG \Rp$ must also satisfy the integral equation~\eqref{eq: int eqn}, giving $$\dot{\htX} = \widehat{h}_{\tG}\Lp\htX\Rp = h_{\tG}\Lp\htX + \Z_{\tG}(0)\Rp.$$ A change of variable {$\tX = \htX + \Z_{\tG}(0)$} will change the ODE to $\dot{\tX} = h_{\tG}(\tX)$, with $\tX(t)$ in $S$ for all real $t$. Since the solution to this differential equation is unique, we have $\tX \equiv \Z_{\tG}$ almost surely, which gives
{$$\htX^{k}(0) \to \htX(0) = \tX(0) - \Z_{\tG}(0) \equiv \0 \text{ almost surely.}$$ Hence, $\hX^{n_k}(0) \to \0$ in distribution. Since this is true for all subsequential limits, we have $\hX^n(0) = \X_n - \Z_G(0) \to \0$ in distribution. The convergence is actually in probability since the limit is degenerate. Since $\X^n(0)=\X_n$ and $\Z_G(0)$ are both in a bounded set $S$, the limit will also be in $L^1$.}
\end{proof}

\section{Urn Model as Stochastic Approximation} \label{sec: lv}
In this Section, we introduce urn model with random replacement matrix and formally state all the assumptions. We also pose the evolution equation of the urn model as stochastic approximation equation and study the basic properties of the step sizes and the drift. We collect some useful results to be used in this Section and later.

\subsection{Assumptions and evolution equation}
We consider an urn model with finitely many $K$ colors indexed by $\{1, 2, \ldots, K\}$. The composition vector after $n$-th trial will be denoted by $\C_n = (C_{n1}, \ldots, C_{nK})$. We shall denote the total content of the urn after $n$-th trial by $S_n = \sum_{i=1}^K C_{ni}$. The replacement matrix for $n$-th trial will be a possibly random, but non-negative $K\times K$ matrix $\R_n$. For $n$-th trial, $\Chi_n$ will be the $K$-dimensional indicator vector of the color drawn in that trial, which will take value $\e_i$, the $i$-th coordinate vector in $\RR^K$ if color $i$ is drawn in $n$-th trial. Therefore for $n\geq 1$, the urn composition evolves as:
\beq
\C_n = \C_{n-1} + \Chi_n\R_n.
\label{EvolutionEqn}
\eeq
For $n\geq 1$, let $\FF_{n}$ be the sigma-field containing the entire information till time $n$. In particular, {$\FF_n$ will be generated by the collection $\C_0$, $\Lp\R_m\Rp_{m=1}^{n}$ and $\Lp\Chi_m\Rp_{m=1}^{n}$.} Thus the basic assumptions on drawing of colors and reinforcement can be summarized as follows:
\bass \label{Assmp: basic}
The adapted sequence $((\Chi_n, \R_n), \FF_n)$ has distribution, which satisfies:
\bei
\ii {The initial configuration $\C_0$ is nonzero with nonnegative coordinates and each coordinate having finite expectation.}
\ii {For all $n\ge 1$ and all $1\le i, j\le K$, $R_{nij}$ has finite expectation.}
\ii For all $i$, $\Prob(\Chi_n=\b{e}_i|\C_0,(\R_m)_{m=1}^{n-1}, (\Chi_m)_{m=1}^{n-1})=\cslnmi$.
\ii Given $\FF_{n-1}$, $\Chi_n$ and $\R_n$ are conditionally independent.
\ee
\eass
Then $\CExp{\Chi_n}{n-1}=\csl{n-1}$.

While we do not make any distributional assumptions on $\R_n$ other than its conditional independence from $\Chi_n$ given the past, we need the conditional expectations of $\R_n$ to be close to a limiting matrix $\H$ at least in some weak sense. For $n\geq 1$, we call the conditional expectation of the replacement matrices $\H_{n-1} := \CExp{\R_n}{n-1}$ as the \textit{generating matrices}. We assume the generating matrices $\H_n$ to be close to a matrix $\H$. The matrix $\H$ has been taken to be deterministic in the existing literature. Using stochastic approximation with random step size and drift, developed in Section~\ref{sec: sa}, we shall study the case where $\H$ is allowed to be random as well.

We discuss the closeness of $\Lp\H_n\Rp$ to $\H$ in terms of the following norm:

For a matrix $\b{A}$, define
\[
\mynorm{\b{A}} := \max_i\sum_{j}\left|A_{ij}\right|.
\]
Note that $\mynorm{\b{A}}=\sup_{\x\neq\b{0}} \Lone{\x\b{A}}/\Lone{\x}$. Hence $\mynorm{\b{A}}$ is the operator norm of the map $\x\mapsto\x\b{A}$ from {$\ell^1$ to $\ell^1$}. We require the generating matrices to be close to a matrix in this operator norm. We shall also use the notation $\sigma\Lp\b{A}\Rp := \min_i\sum_{j}\left|A_{ij}\right|$.

As is traditional and assumed by \cite{BaiHu} and \cite{LaruellePages}, we shall make the following assumption on $\H$.
\bass \label{Assmp: H}
The matrix $\H$ is possibly random with nonnegative entries and is irreducible almost surely.
\eass

The following properties of $\H$ are easy consequence of Perron-Frobenius theorem.
\begin{lemma} \label{lem: H prop}
The matrix $\H$ will then have the following properties.
\bei
\ii The eigenvalue of $\H$ with largest real part, $\pfval$, is simple, real and positive. All other eigenvalues are less than or equal to $\pfval$ in modulus and have strictly smaller real part. \label{Assmp: eval}
\ii There exist unique left eigenvector $\lpf$ and right eigenvector $\rpf^T$ corresponding to $\pfval$ such that $\lpf$ and $\rpf^T$ have all coordinates strictly positive and are normalized so that $\lpf\b{1}^T=1$ and $\lpf\rpf^T=1$. \label{Assmp: evec}
\ii $0<\sigma(\H) \leq \pfval \leq \rho(\H) < \infty$.
\ee
\end{lemma}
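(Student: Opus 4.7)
The plan is to deduce all three parts from the standard Perron--Frobenius theorem for irreducible nonnegative matrices, treating $\H$ almost surely (on the event that $\H$ is irreducible, which by Assumption~\ref{Assmp: H} has probability one) as a deterministic irreducible nonnegative $K\times K$ matrix. For parts \emph{(i)} and \emph{(ii)}, I would simply invoke the Perron--Frobenius theorem (in the form given in Chapter~1.3 of \cite{Seneta}, already cited in the introduction), which yields the existence of a simple, real, positive eigenvalue $\pfval$ equal to the spectral radius of $\H$, together with strictly positive left and right eigenvectors $\lpf$ and $\rpf^T$, unique up to scaling; the normalizations $\lpf\b{1}^T=1$ and $\lpf\rpf^T=1$ can then be imposed unambiguously since $\lpf\b{1}^T$ and $\lpf\rpf^T$ are both strictly positive.

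The only subtlety inside \emph{(i)} is the assertion that every other eigenvalue $\lambda$ has $\operatorname{Re}(\lambda)<\pfval$ (rather than merely $|\lambda|\le\pfval$). I would handle this by recalling the classification of peripheral eigenvalues for an irreducible nonnegative matrix: if $\H$ has period $h$, then the eigenvalues on the circle $|\lambda|=\pfval$ are exactly $\pfval e^{2\pi i k/h}$ for $k=0,1,\ldots,h-1$, and for $k\neq 0$ one has $\operatorname{Re}(\pfval e^{2\pi ik/h})=\pfval\cos(2\pi k/h)<\pfval$. All eigenvalues strictly inside the spectral radius disk of course also satisfy $\operatorname{Re}(\lambda)\le|\lambda|<\pfval$, so the strict inequality on real parts holds throughout.

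For part \emph{(iii)}, the two middle inequalities $\sigma(\H)\le\pfval\le\mynorm{\H}$ are classical bounds on the Perron root in terms of row sums, which I would recall by a one-line Rayleigh/Collatz argument: applying $\H$ to the right Perron eigenvector $\rpf^T$ gives $\pfval\rpf_i=\sum_j H_{ij}\rpf_j$, so $\pfval=\sum_j H_{ij}\rpf_j/\rpf_i$ is a convex combination of row-sum-like quantities sandwiched between $\sigma(\H)$ and $\mynorm{\H}$. Finiteness $\mynorm{\H}<\infty$ is immediate from Assumption~\ref{Assmp: basic}, since $\H$ has finite nonnegative entries. The positivity $\sigma(\H)>0$ follows because irreducibility of $\H$ forces every row to contain at least one positive entry: if some row $i$ were identically zero, then the index $i$ could not reach any $j\neq i$ in the directed graph associated with $\H$, contradicting irreducibility.

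I do not expect any genuine obstacle; the lemma is a straightforward packaging of known facts. The only point requiring mild care is spelling out why the peripheral eigenvalues other than $\pfval$ have strictly smaller real part, which would be done in a single sentence using the cyclic structure described above.
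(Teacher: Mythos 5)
Your proposal is correct and follows essentially the same route as the paper, which states the lemma without a written proof as an ``easy consequence of Perron--Frobenius theorem'' (with the background facts, including the row-sum bounds $\sigma(\H)\le\pfval\le\rho(\H)$ and the strictly smaller real parts of the non-dominant eigenvalues, already recalled in the introduction with reference to Chapter~1.3 of \cite{Seneta}); your write-up merely fills in those standard details. One small polish for part \emph{(iii)}: the quantity $\sum_j H_{ij}\rpf_j/\rpf_i$ is not literally a convex combination of row sums, so either evaluate it at the indices where $\rpf_i$ is maximal and minimal, or use the normalized left eigenvector to write $\pfval=\lpf\H\1^T=\sum_i \pi_{\H,i}\sum_j H_{ij}$, which is a genuine convex combination of the row sums.
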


{\cite{Zhang12, Zhang}} assumed that $\H_n$ converges to $\H$ in Cesaro sense in the operator norm, {almost surely,} that is, $\frac{1}{n}\sum_{m=0}^{n-1}\rho\Lp\H_m-\H\Rp \to 0$ almost surely, while \cite{BaiHu} assumed $\sum_n \frac1n \rho \Lp\H_n-\H\Rp < \infty$ {almost surely}, which, by Kronecker's Lemma, implies the assumption made by \citeauthor{Zhang}. \cite{LaruellePages} assumed $\rho\Lp\H_n-\H\Rp\to 0$ {almost surely}, which also implies the assumption of \citeauthor{Zhang}.

\bark
Note that none of the two conditions made by \cite{BaiHu} or \cite{LaruellePages} imply the other. Also the assumption made by \cite{Zhang} is weaker than these two conditions together, i.e., both the conditions of \citeauthor{BaiHu} and \citeauthor{LaruellePages} may fail, yet that of \citeauthor{Zhang} may hold. This can be seen by taking the sequence $\Lp a_n\Rp_{n=0}^\infty$ defined as $a_n=1$ if $n=[k\log k]$ for some integer $k\geq 1$, and $a_n=0$ otherwise. Then $\lim_{n\to\infty}a_n$ does not exist and $\sum_n a_n/n=\infty$. However, if $[k \log k] \leq n-1 < [(k+1)\log (k+1)]$, then $\sum_{m=0}^{n-1} a_m/n \leq k/[(k+1) \log (k+1)] \to 0$.
\eark

To prove convergence in $L^1$ and hence in probability, {we make further assumption as in \cite{Zhang12, Zhang}, where the convergence is now taken in the sense of probability}:
\bass \label{Assmp: weak}
The generating matrix sequence $(\H_n)$ satisfy:
{\bei
\ii[] $\H_n$ converges to $\H$ in Cesaro sense in the operator norm in probability, i.e., 
$$\frac1n \sum_{k=1}^{n} \rho\Lp\H_{k-1}-\H\Rp\to 0 \quad \text{in probability.}$$
\ee}
\eass

{Finally, all the previous analyses of the urn models with random replacement matrices - \cite{BaiHu, LaruellePages, Zhang12, Zhang} - make appropriate assumptions on boundedness of conditional moments of the replacement matrices. For example, \cite{BaiHu} assumes the replacement matrices to be conditionally $L^{2+\epsilon}$-bounded for some $\epsilon>0$, while \cite{LaruellePages,Zhang} need the replacement matrices to be conditionally $L^2$-bounded. \cite{Zhang12} assumed the replacement matrices to be conditionally $L (\log L)^p$-bounded for some $p>1$. Note that, when the replacement matrices are i.i.d.\ independent of the color selection mechanism, the conditional moment bounds hold trivially. We also make an appropriate uniform integrability bound.
\bass \label{Assmp: ui}
The replacement matrix sequence $(\R_n)$ satisfy:
\bei
\ii[] {$\Lp \rho(\R_n) \Rp_n$} is uniformly integrable.
\ee
\eass}

Due to uniform integrability {of Assumption~\ref{Assmp: ui}}, probability convergence in Assumption~\ref{Assmp: weak} can be strengthened to $L^1$ convergence.

\begin{lemma} \label{lem: L1}
Let $\Lp \rho(\R_n) \Rp$ be uniformly integrable and further assume that
$$\frac1n \sum_{k=1}^{n} \rho\Lp\H_{k-1}-\H\Rp\to 0 \quad \text{in probability.}$$
Then, $\rho(\H)$ has first moment finite and
$$\frac1n \sum_{k=1}^{n} \rho\Lp\H_{k-1}-\H\Rp\to 0 \quad \text{in $L^1$.}$$
\end{lemma}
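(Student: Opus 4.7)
The plan is to promote the given probability convergence to $L^1$ convergence via Vitali's convergence theorem: it suffices to show that the nonnegative sequence $\bar Z_n := \frac{1}{n}\sum_{k=1}^n \rho(\H_{k-1} - \H)$ is uniformly integrable. Integrability of $\rho(\H)$ will emerge along the way, and is in fact needed to dominate $\rho(\H_{k-1} - \H)$ via the triangle inequality.

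First I would exploit convexity of $\rho$ (which, being a norm, is convex) through conditional Jensen's inequality:
\[
\rho(\H_{k-1}) = \rho\!\left(\Exp[\R_k \mid \FF_{k-1}]\right) \leq \Exp[\rho(\R_k) \mid \FF_{k-1}].
\]
Assumption~\ref{Assmp: ui} asserts that $\{\rho(\R_k)\}$ is uniformly integrable. A standard argument (splitting on $\{\rho(\R_k) > \sqrt{M}\}$ and its complement on the event $\{\Exp[\rho(\R_k) \mid \FF_{k-1}] > M\}$, controlling the latter through Markov's inequality and $L^1$-boundedness) shows that the family of conditional expectations is then also uniformly integrable. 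Being dominated by a UI family, $\{\rho(\H_{k-1})\}$ is itself UI, and in particular $L^1$-bounded, say $\sup_k \Exp[\rho(\H_{k-1})] \leq M_0 < \infty$.

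To obtain $\Exp[\rho(\H)] < \infty$, I would apply the reverse triangle inequality $|\rho(\H_{k-1}) - \rho(\H)| \leq \rho(\H_{k-1} - \H)$ and average to get
\[
\left|\frac{1}{n}\sum_{k=1}^n \rho(\H_{k-1}) - \rho(\H)\right| \leq \bar Z_n,
\]
which tends to $0$ in probability by hypothesis. Hence $\frac{1}{n}\sum_{k=1}^n \rho(\H_{k-1}) \to \rho(\H)$ in probability. Passing to an almost surely convergent subsequence $(n_j)$ and invoking Fatou's lemma together with the $L^1$-bound above gives
\[
\Exp[\rho(\H)] \leq \liminf_{j\to\infty} \frac{1}{n_j}\sum_{k=1}^{n_j} \Exp[\rho(\H_{k-1})] \leq M_0.
\]

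Finally, once $\rho(\H)$ is known to be integrable, the bound $\rho(\H_{k-1} - \H) \leq \rho(\H_{k-1}) + \rho(\H)$ displays $\{\rho(\H_{k-1} - \H)\}$ as dominated by the sum of a UI family and a single integrable random variable, hence UI. Uniform integrability is preserved by convex combinations, so $\{\bar Z_n\}$ is UI, and Vitali's theorem applied to the convergence $\bar Z_n \to 0$ in probability yields the desired $L^1$ convergence. The main subtlety I anticipate is precisely this need to secure $\Exp[\rho(\H)] < \infty$ before treating $\rho(\H)$ as a benign additive term: the reverse triangle inequality together with Fatou provides exactly the route around this chicken-and-egg obstacle.
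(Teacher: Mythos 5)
Your proposal is correct and follows essentially the same route as the paper: reduce to uniform integrability of the averages, bound $\rho(\H_{k-1})$ by $\Exp[\rho(\R_k)\mid\FF_{k-1}]$ (the paper uses a direct max-of-row-sums argument rather than conditional Jensen, but it is the same inequality), use UI of conditional expectations and of convex combinations, and obtain integrability of $\rho(\H)$ from the probability convergence together with Fatou's lemma and the $L^1$-bound. The handling of the ``chicken-and-egg'' point about $\Exp[\rho(\H)]<\infty$ matches the paper's argument.
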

\begin{proof}
Since we have already assumed convergence in probability, it is enough to show that the sequence on left is uniformly integrable. Further, by triangle inequality
\begin{equation}\label{eq: Lp upgrade bd}
\frac1{n} \sum_{k=0}^{n-1} \rho\Lp\H_k-\H\Rp \le \frac1n \sum_{k=0}^{n-1} \rho(\H_k) + \rho(\H).
\end{equation}

We first consider the averages and show them to be uniformly integrable. Note that
\begin{equation} \label{eq: rho compare}
\begin{split}
\rho(\H_n) &= \max_{1\le i\le K} \sum_{j=1}^{K} H_{n;ij} = \max_{1\le i\le K}\Exp \Lp \sum_{j=1}^{K} R_{n+1;ij} \mid \FF_n \Rp\\
&\leq \Exp \Lp \max_{1\le i\le K} \sum_{j=1}^{K} R_{n+1;ij} \mid \FF_n \Rp = \Exp \Lp \rho \Lp \R_{n+1} \Rp \mid \FF_n \Rp.
\end{split}
\end{equation}
Since $\rho(\R_n)$ are uniformly integrable, so are their conditional expectations, giving uniform integrability of $\Lp \rho(\H_n) \Rp$. Since uniform integrability extends to the convex hull of a collection, the average on the right side of~\eqref{eq: Lp upgrade bd}, $\frac1n \sum_{k=0}^{n-1} \rho(\H_k)$ are also uniformly integrable. Then, to complete the proof using the bound in~\eqref{eq: Lp upgrade bd}, it is enough to show $\rho(\H)$ has first moment finite.

Since we have shown $\Lp \rho(\H_n)\Rp$ is uniformly integrable, we have $\Lp \rho(\H_n) \Rp$ is $L^1$ bounded. Hence $\Lp \frac1n \sum_{k=0}^{n-1} \rho(\H_k) \Rp$ is $L^1$ bounded. Further,
$$\left|\frac1n \sum_{k=1}^n \Lp \rho(\H_{k-1}) - \rho(\H) \Rp \right| \le \frac1n \sum_{k=1}^n \rho(\H_{k-1} - \H) \to 0$$
in probability and hence, by Fatou's lemma, $\rho(\H)$ is integrable, as required.
\end{proof}

To apply the above result, we need to adapt the evolution equation \eqref{EvolutionEqn} appropriately. {Let $Y_n$ be the number of balls added in the $n$-th trial. Then $Y_0:=S_0$ and for $n\geq 1$, $Y_n:=S_n-S_{n-1}$.}

We rewrite the evolution equation \eqref{EvolutionEqn} as
\beq
\csd{n} = \csd{n-1} + \frac{1}{S_n}\h\Lp\csd{n-1}\Rp + \frac{\mgdh_n}{S_n} + \frac{\bxi_n}{S_n}
\label{SA}\eeq
where $\h$, $\mgdh_n$, $\bxi_n$ are defined as follows. The drift, indexed by $K\times K$ matrices with nonnegative entries, is defined as
\begin{equation} \label{eq: H def}
\h\Lp\X\Rp := \X\H - \X\Lp\X\H\b{1}^T\Rp.
\end{equation}
For each $n\geq 1$,
\begin{align}
\begin{split} \label{eq: xi def}
\bxi_n := &\csd{n-1}\Lp\H_{n-1}-\H\Rp - \csd{n-1}\Lp\csd{n-1}\Lp\H_{n-1}-\H\Rp\b{1}^T\Rp\\
= &h_{\H_{n-1}-\H} \Lp\csd{n-1}\Rp,
\end{split}
\intertext{and}
\mgdh_n := &\MGD{\Chi_n\R_n-\csd{n-1}Y_n}{n-1}. \label{eq: D def}
\end{align}
This gives the stochastic approximation for the sequence $\Lp\csl{n}\Rp$, which takes values in the closed bounded convex set of probability simplex in $\RR^K$.

\subsection{Drift} \label{subsec: lotka}
{We now study the basic properties of the drift defined in~\eqref{eq: H def} and shall show that the corresponding ODE has unique solution, which satisfy the continuity assumptions of Theorem~\ref{thm: SA weak}. Since $\h(\x)$ is polynomial in $(\H, \x)$, it is jointly continuous.}


Next, we study the ODE associated with the stochastic approximation algorithm. It is a first order quadratic equation of Lotka-Volterra type.
\bp Only solution of $\dot{\X}=\h\Lp\X\Rp$, where $\X(t)$ is a probability vector for all $t$, is $\X(t)=\lpf$ for all $t$.\label{Lotka}
\ep
\begin{proof} Let $\H=\V\J\U$ be a Jordan decomposition of $\H$ such that $\V\U=\U\V=\I$. We may assume $J_{11}=\pfval$, the first row of $\U$ is $\lpf$, first column of $\V$ is $\rpf^T$. Let $\Y(t)=\X(t)\V$. Since $\X$ is a probability vector and $\rpf$ have all coordinates positive, $Y_1(t)=\X(t)\rpf^T$ is positive and bounded away from $0$. Also $\Lp\Y(t):t\in\RR\Rp$ is bounded since $\Lp\X(t):t\in\RR\Rp$ is a probability vector, for all $t$ real. In terms of $\Y$ the ODE becomes $\dot{\Y}=\X\H\V-\X\V\Lp\X\H\b{1}^T\Rp=\Y\J- \Y\Lp\Y\J\U\b{1}^T\Rp$. So $dY_1/dt=Y_1\Lp\pfval- \Y\J\U\b{1}^T\Rp$ and for $i>1$, $dY_i/dt=Y_{i-1}J_{i-1,i}+ Y_i\Lp J_{ii}-\Y\J\U\b{1}^T\Rp$. Hence for $i>1$
\[
\frac{dY_i/Y_1}{dt} = \frac{\dot{Y_i}Y_1-Y_i\dot{Y_1}}{Y_1^2} = \frac{Y_{i-1}}{Y_1}J_{i-1,i} + \frac{Y_i}{Y_1}\Lp J_{ii}- \pfval\Rp.
\]
Let $\Z:=\Y/Y_1$. Then we have $\dot{\Z}=\Z\Lp\J-\pfval\I\Rp$. So
\[
\Z(t)=\Z(0)\exp\Lp t\Lp\J-\pfval\I\Rp\Rp.
\]
Since $\Y$ is bounded and $Y_1$ is bounded away from $0$, $\Z$ is bounded. $Z_1$ is by definition identically $1$. If possible, consider minimum $i>1$ such that $Z_i(0)\neq 0$. Since { $\J-\pfval\I$} is upper triangular and has first row null, same is true for $\exp\Lp t\Lp\J-\pfval\I\Rp\Rp$. So $Z_i(t) = Z_i(0)\exp(t(J_{ii} - \pfval))$. By Proposition~\ref{lem: H prop}~\eqref{Assmp: eval}, $J_{ii}-\pfval$ have negative real part. Hence $Z_i(t)$ is unbounded as $t \to -\infty$, which leads to a contradiction. Hence we have $\Z\equiv\b{e}_1$. So $\X(t)=Y_1(t)\b{e}_1\U=Y_1(t)\lpf$. Since $\X(t)$ is a probability vector for all $t$, we must have $Y_1\equiv 1$, giving $\X\equiv\lpf$.
\end{proof}

We now consider the continuity of the unique solution, which is the constant function $\lpf$ in $\H$, as required in Theorem~\ref{thm: SA weak}. Since we could not locate a ready reference in the literature, we provide a proof below inspired by Proposition~2.14.1 of \cite{resnick}.
\bp
Under Assumption~\ref{Assmp: H}, both $\pfval$ and $\lpf$ are continuous in $\H$. \label{prop: cont}
\ep
\begin{proof}
The continuity of $\pfval$ has been proved by a variety of methods in the literature. A recent simple proof from the first principles is available in \cite{meyer}.

For the proof of continuity of $\lpf$, first observe that
\begin{equation}\label{eq: PF}
\lpf\Lp \pfval \I - \H + \1^T \1 \Rp = \pfval \lpf - \pfval \lpf + \1 = \1,
\end{equation}
since $\lpf \1^T = 1$. We next show that $\Lp \pfval \I - \H + \1^T \1 \Rp$ is invertible. In fact, it is enough to show that the only solution of
\begin{equation} \label{eq: soln}
\Lp \pfval \I - \H + \1^T \1 \Rp \x^T = \0^T
\end{equation}
is $\x^T=\0^T$. Since $\Lp \pfval \I - \H + \1^T \1 \Rp \x^T = \0^T$, we have $$0 = \lpf \Lp \pfval \I - \H + \1^T \1 \Rp \x^T = \1 \x^T$$ or
\begin{equation} \label{eq: zero}
\1^T \1 \x^T = \0^T.
\end{equation}
Then, from~\eqref{eq: soln}, we get $\H \x^T = \pfval \x^T$. Hence, we also have $(2 \pfval)^{-1} (\H + \pfval \I) \x^T = \x^T$. Iterating and averaging we get,
\begin{equation}\label{eq: ave}
\frac1N \sum_{n=0}^N (2\pfval)^{-n} (\H + \pfval \I)^n \x^T = \x^T.
\end{equation}
Now, by Lemma~\ref{lem: H prop}, the matrix $(2 \pfval)^{-1} (\H + \pfval \I)$ has all eigenvalues with nonnegative real parts, has $1$ as a simple eigenvalue with corresponding left and right eigenvectors $\lpf$ and $\rpf^T$ respectively, normalized to $\lpf \1^T = 1 = \lpf \rpf^T$ and have all other eigenvalues with moduli strictly less than $1$. Then by the result on Cesaro summability in \cite[p.~633]{meyerBook}, $\frac1N \sum_{n=0}^N (2\pfval)^{-n} (\H + \pfval \I)^n \to \rpf^T \lpf$. Hence, from~\eqref{eq: ave}, we have
\begin{equation}\label{eq: x}
\rpf^T \lpf \x^T = \x^T.
\end{equation}
Hence, recalling from~\eqref{eq: zero}, $\0^T = \1^T \1 \x^T = \1^T \1 \rpf^T \lpf \x^T$. Since $\rpf^T$ has all entries strictly positive, we also have all entries of $\1^T \1 \rpf^T$ strictly positive. Thus, we must have $\lpf \x^T = 0$. Then,~\eqref{eq: x} gives $\x^T = \rpf^T \lpf \x^T = \0^T$ as required to show $\Lp \pfval \I - \H + \1^T \1 \Rp$ is invertible and we have, from~\eqref{eq: PF},
$$\lpf = \1 \Lp \pfval \I - \H + \1^T \1 \Rp^{-1},$$
which is continuous in $\H$.
\end{proof}

\subsection{Some useful results}
Before considering the step size sequence, we gather some results, which will be useful in studying the step size sequence as well as the error terms.

The first one is about $L^1$ convergence of product of two random elements.

\begin{lemma} \label{L1 prod}
Let $(X_n)$ be a sequence of scalar random variables converging to $X$ in $L^1$, while $(Y_n)$ be a uniformly bounded sequence of random vectors converging to $Y$ in probability. Then $X_n Y_n \to X Y$ in $L^1$.
\end{lemma}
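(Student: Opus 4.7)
The plan is the standard ``add and subtract'' decomposition
\[
X_n Y_n - X Y = (X_n - X) Y_n + X (Y_n - Y),
\]
and to show each summand converges to $\0$ in $L^1$.

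For the first summand, let $M$ be a uniform bound on $\Lone{Y_n}$, which exists by hypothesis. Then
\[
\Exp \Lone{(X_n - X) Y_n} \le M \, \Exp |X_n - X| \to 0,
\]
since $X_n \to X$ in $L^1$.

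For the second summand, first observe that $Y$ is almost surely bounded as well: taking an almost surely convergent subsequence of $(Y_n)$, which is uniformly bounded by $M$, we conclude $\Lone{Y} \le M$ almost surely. Consequently $\Lone{X(Y_n - Y)} \le 2 M |X|$, which is integrable. Also $X(Y_n - Y) \to \0$ in probability, since $|X|$ is finite almost surely and $Y_n - Y \to \0$ in probability. The remaining step is to upgrade this to $L^1$ convergence; the dominated convergence theorem requires almost sure convergence, so the plan is to use the subsequence principle. Given any subsequence, extract a further subsequence along which $Y_n \to Y$ almost surely; along this subsubsequence, ordinary dominated convergence (with dominating function $2M |X|$) gives $\Exp \Lone{X(Y_n - Y)} \to 0$. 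Since every subsequence admits a further subsequence converging in $L^1$, the full sequence $\Exp \Lone{X(Y_n - Y)}$ must converge to $0$.

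The main delicate point is the $L^1$-upgrade in the second term: we only have in-probability convergence of $Y_n - Y$, so dominated convergence cannot be invoked directly; the subsequence argument, powered by the uniform bound $M$ on the $Y_n$'s (and hence on $Y$), is what carries the proof through.
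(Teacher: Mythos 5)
Your proof is correct and follows essentially the same route as the paper: the decomposition $X_nY_n - XY = (X_n - X)Y_n + X(Y_n - Y)$, the uniform bound on the first term, and dominated convergence on the second. The only difference is that you spell out the subsequence argument to justify dominated convergence under convergence in probability, which the paper leaves implicit; this is a welcome extra bit of care, not a change of approach.
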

\begin{proof}
We may assume without loss of generality that $(Y_n)$ is uniformly bounded by $1$. Hence same will be true for $Y$. Now, we have $X_n Y_n - XY = (X_n-X)Y_n + X(Y_n-Y)$. The first term is bounded by $|X_n-X|$, which converges to $0$ in $L^1$, while the second term is bounded by $2|X|$, which is integrable. The second term then is negligible in $L^1$ by Dominated Convergence Theorem.
\end{proof}

The following result, that enables us to control the step size sequence and the error terms for the convergence in $L^1$ and hence in probability.

%

{\bp \label{thm: chow_prob}
If $(X_n, \FF_n)_{n=0}^\infty$ is an adapted sequence with $\Lp X_n \Rp$ uniformly integrable, then $\Lp X_n - \CExp{X_n}{n-1} \Rp$ is also uniformly integrable. Further $\frac1n \sum_{i=0}^{n-1} \Lp X_i - \CExp{X_i}{i-1} \Rp \to 0$ in $L^1$.
\ep
The proof of the first result is immediate, while the second one follows from Theorem~2.22 of \cite{HallHeyde}.}

\subsection{Step size}
We consider the stochastic approximation equation~\eqref{SA} with random step size $1/S_{n+1}$. Canonically stochastic approximation equations have non-random step size, the most common step size being $1/(n+1)$. The stochastic approximation equations considered by \cite{LaruellePages}, {\cite{Zhang12}} and \cite{Zhang} used size $1/(n+1)$. The step size $1/S_{n+1}$ has been used to analyze two color urn models with fixed replacement unbalanced matrices in \cite{Renlund, Renlund2}. While the use of the random step sizes simplifies the ODE, we need to carefully establish the properties of the step size sequence $(1/S_{n+1})$, as required in Theorem~\ref{thm: SA weak}.

%


{We proceed to analyze the random step size $1/S_{n+1}$ by comparing it with deterministic step size $1/(n+1)$. We compare the random and deterministic step size sequences through the corresponding level crossing times.} Consider the deterministic level crossing times corresponding to the step size sequence $(1/(n+1))$ and define, for $n\geq 0$ and $t>0$,
\begin{align*}
\st{n}{t}:= &\min \LP m\geq n:\frac{1}{n+1}+\cdots+\frac{1}{m+1}>t\RP\\
= &\max \LP m\geq n: \frac1{n+1} + \cdots + \frac1m \leq t \RP
\intertext{and}
\stb{n}{t}:= &\min\LP 0\leq m\leq n-1: \frac{1}{m+2}+\cdots+\frac{1}{n}\leq t\RP.
\end{align*}
Here we define empty sums to be zero.

We first obtain bounds on these level crossing times.
\bl For the level crossing times for the deterministic step size sequence $(1/(n+1))$, for any $t>0$, we have $n\leq \st{n}{t} \leq n e^{t+1}$, {$\stb{n}{t}\leq n$} for all $n\ge 1$ and $n \leq 2 e^{t+1} \stb{n}{t}$ for all $n\ge e^{t+1}$. \label{lem: step bd}
\el
\begin{proof}
We use the inequality that, for any $m\geq 1$,
\begin{equation}
\log m < 1 + \frac{1}{2} + \cdots + \frac{1}{m} \leq 1 + \log m. \label{eq: log}
\end{equation}
So for any $t>0$,
\[
\log\Lp\st{n}{t}\Rp - (1+\log n) < \frac{1}{n+1} + \cdots + \frac{1}{\st{n}{t}} \leq t,
\]
leading to $n\leq \st{n}{t} \leq n e^{t+1}$ for all $n$. Note that the left inequality follows from the definition and only the right inequality needs to be deduced from the previous one.

For the backward level crossing time, observe that, if $(1/2)+\cdots+(1/n)>t$ holds, then $\stb{n}{t}\ge 1$. Using $(1/2)+\cdots+(1/n) > \log n - 1$, for $\log n - 1 \geq t$ or equivalently $n\geq e^{t+1}$, we have $\stb{n}{t} \ge 1$. In this case, we also have
\[
\log n - (1+\log(\stb{n}{t}+1)) \leq \frac{1}{\stb{n}{t}+2} + \cdots + \frac{1}{n} \leq t,
\]
giving $\stb{n}{t} + 1 \leq n \leq e^{t+1} (\stb{n}{t}+1)$ or $\stb{n}{t} \leq n \leq 2 e^{t+1} \stb{n}{t}$. Again, the left inequality follows from the definition and only the right one needs to be deduced, where we use $\stb{n}{t}\ge 1$.
\end{proof}

We need to show that the deterministic and random step sizes are comparable over a suitable range with high probability. To define the range, we define comparable sequences. {A pair of} nondecreasing, diverging to infinity, sequences $(p_n)$ and $(q_n)$ will be called \textit{comparable} if, there exists $M>1$ such that $p_n\le q_n$ for all $n$ and $q_n\le Mp_n$ eventually for all $n$. From Lemma~\ref{lem: step bd}, it turns out that we can typically take one of the sequences to be the level crossing times of the step size sequence $(1/(n+1))$, while the other can be the usual sequence of integers.

We shall also show that the random and deterministic step size sequences are of same order with high probability on a range given by two comparable sequences. For two comparable sequences $(p_n)$ and $(q_n)$ and two real numbers $A$ and $B$ with $0<B<A$, and for each $n\geq 1$, we define the event $E_n(A,B)$, on which two step size sequences are {of same order}, as
\begin{equation} \label{eq: E def}
E_n(A,B):=\LT B\leq\frac{S_{m+1}}{m+1}\leq A \quad\mbox{for all}\quad p_n\leq m\leq q_n\RT.
\end{equation}
By choosing $A$ and $B$ suitably, we shall show the event to have arbitrarily high probability.
\bp Let $(p_n)$ and $(q_n)$ be {a pair of} comparable sequences. For two real numbers $A$ and $B$ with $0<B<A$, we have
\[
\limsup_{n\to\infty}\Prob(E_n(A,B)^c)\leq \Prob\Lp\sigma(\H)<2B\Rp+\Prob\Lp\rho(\H)>A/2\Rp.
\]
\label{weakstep}
\ep
\begin{proof}
We begin by introducing two notations which will help us study the sequence $S_n$. We define $\widetilde S_n = \sum_{k=1}^{n} \rho (\H_{k-1} - \H)$. We also define the martingale sum as $\Bar S_n = \sum_{k=1}^{n} \Lp Y_k - \Exp Y_k \mid \FF_{k-1} \Rp$. Then we have,
$$S_n - S_0 = \Bar S_n + \sum_{k=1}^{n} \frac{\C_{k-1}}{S_{k-1}} \Lp \H_{k-1} - \H \Rp \b{1}^T + \sum_{k=1}^{n} \frac{\C_{k-1}}{S_{k-1}} \H \b{1}^T.$$
Hence, we get the following bounds:
\begin{equation}
\Bar S_n - \widetilde S_n + n \sigma(\H) \le S_n \le S_0 + \Bar S_n + \widetilde S_n + n \rho(\H). \label{eq: Sn split}
\end{equation}

Next, fix $0<B<A$. Observe that
\beq
\Prob(E_n(A,B)^c)\leq \Prob\Lp\themin\frac{S_{m+1}}{m+1}<B\Rp + \Prob\Lp\themax\frac{S_{m+1}}{m+1}>A\Rp. \label{weakstep1}
\eeq
We handle the two terms on the right side separately and show that their limiting bounds are the corresponding terms in the statement of the Proposition.

Using~\eqref{eq: Sn split}, note that the second term on the right side of~\eqref{weakstep1} satisfies
\begin{multline*}
\Prob\Lp\themax\frac{S_{m+1}}{m+1}>A\Rp\\  \leq \Prob\Lp\frac{S_0}{p_n}>\frac{A}6\Rp + \Prob\Lp\themax\frac{\Bar{S}_{m+1}}{m+1}>\frac{A}{6}\Rp\\  +  \Prob\Lp\themax\frac{\widetilde{S}_{m+1}}{m+1}>\frac{A}{6}\Rp  +  \Prob\Lp\rho(\H)>\frac{A}{2}\Rp.
\end{multline*}
Clearly, the first term on the right goes to zero. Using Doob's maximal inequality and the properties of the sequences $(p_n)$ and $(q_n)$, the second term on the right side gets bounded by, eventually in $n$,
\[
\Prob\Lp\themax\Bar{S}_{m+1}>p_n\frac{A}{6}\Rp \leq \frac{6}{A}\Exp\frac{|\Bar{S}_{q_n+1}|}{p_n} \leq \frac{12M}{A}\Exp\frac{|\Bar{S}_{q_n+1}|}{q_n+1},
\]
which is negligible by Proposition~\ref{thm: chow_prob}. {Note that $0\le Y_n =\Chi_n\R_n\b{1}^T \le \rho(\R_n)$}, which is uniformly integrable by {Assumption~\ref{Assmp: ui}}, and hence its martingale difference sequence is also uniformly integrable. Finally for the third term on the right side, notice that $(\widetilde{S}_n)$ is non-negative and monotonically increasing, and hence, eventually in $n$,
\[
\Prob\Lp\themax\frac{\widetilde{S}_{m+1}}{m+1}>\frac{A}{6}\Rp \leq \Prob\Lp\frac{\widetilde{S}_{q_n+1}}{p_n}>\frac{A}{6}\Rp \leq \Prob\Lp\frac{\widetilde{S}_{q_n+1}}{q_n+1}>\frac{A}{12M}\Rp,
\]
which is negligible again by {Assumption~\ref{Assmp: weak}}. These provide appropriate asymptotic bounds for the second term on the right side of~\eqref{weakstep1}.

We now consider the first term of~\eqref{weakstep1}, which, by~\eqref{eq: Sn split}, satisfies
\begin{multline*}
\Prob\Lp\themin\frac{S_{m+1}}{m+1}<B\Rp\leq\Prob\Lp\themin\frac{\Bar{S}_{m+1}}{m+1}<-\frac{B}2\Rp\\
+\Prob\Lp\themax\frac{\widetilde{S}_{m+1}}{m+1}>\frac{B}2\Rp+ \Prob\Lp\sigma(\H)<2B\Rp.
\end{multline*}
As before the first two terms become negligible by applying Doob's maximal inequality on $(-\Bar S_n)$ and Proposition~\ref{thm: chow_prob} with {Assumption~\ref{Assmp: ui}} for the first term and using {Assumption~\ref{Assmp: weak}} for the second term. These together provide the correct asymptotic bound for the first term on the right side of~\eqref{weakstep1}.
\end{proof}

{Now we are ready to obtain the negligibility of the random step as well as the divergence of the sum of the random step size.}
\bc Under {Assumptions~\ref{Assmp: basic}~-~\ref{Assmp: ui}}, $S_n\to\infty$ and $\sum 1/S_n=\infty$ almost surely. \label{cor: step}\ec
\begin{proof}
Fix $\delta>0$ and $T>0$. Observe, using Lemma~\ref{lem: step bd}, that the sequences $p_n=n$ and $q_n=\st{n}{TA}$ satisfy the conditions of Proposition~\ref{weakstep}. Then, using Proposition~\ref{weakstep}, get $A$, {$B$} and $n$, we have
{\begin{equation} \label{eq: comp}
\Prob\Lp B\le \frac{S_{m+1}}{m+1}\le A \text{ for all $n\leq m<\st{n}{TA}$}\Rp>1-\delta.
\end{equation}}
Note that {on the above event} $$\sum_{m=n}^{\st{n}{TA}}1/S_{m+1}\geq \frac1A \sum_{m=n}^{\st{n}{TA}} 1/(m+1)>\frac{TA}{A}=T,$$ which implies $\sum 1/S_m>T$. So {$\Prob(\sum_m 1/S_m> T)>1-\delta$.} Since $\delta>0$ and $T>0$ are arbitrary, $\sum 1/S_n = \infty$ almost surely. {Also, from~\eqref{eq: comp}, we have $\Prob \Lp S_{n+1} \ge B(n+1) \Rp > 1- \delta$. Hence $S_n \to \infty$ in probability and $S_n$ being monotone increasing, the convergence is also almost sure.}
\end{proof}
Thus, we conclude that the step size sequence $(1/S_{n+1})$ satisfy the conditions of Theorem~\ref{thm: SA weak} under {Assumptions~\ref{Assmp: basic}~-~\ref{Assmp: ui}}.

\section{Analysis of Urn Models using Stochastic Approximation} \label{sec: main}

After checking that the random drift and the random step sizes satisfy the necessary conditions, we are ready to apply Theorem~\ref{thm: SA weak} to urn models with random replacement matrices both under {Assumptions~\ref{Assmp: basic}~-~\ref{Assmp: ui}} and study the behavior of the proportion vector, configuration vector as well as the count vector. We begin by analyzing the error terms in~\eqref{SA} under the {Assumptions~\ref{Assmp: basic}~-~\ref{Assmp: ui}}.

\subsection{Error terms} \label{subsec: prob}
We first prove a general result for negligibility of delayed sums of error terms corresponding to martingale differences.  We then apply the result to specific cases of martingale difference sequences $(\mgdh_n)$ and $(\bxi_n)$.

\subsubsection{Negligibility of delayed sums over a deterministic range}
We first show the negligibility of delayed sums for error terms defined by an arbitrary martingale difference sequence and the random step size sequence $(1/S_{n+1})$. However, the range of summation is given by the level crossing times of the deterministic step size sequence $(1/(n+1))$. In particular, if $(p_n)$ and $(q_n)$ are two comparable sequences and $\Lp(X_n, \FF_n)\Rp$ is a uniformly integrable martingale difference sequence, we consider the delayed sums of the sequence $(X_{n+1}/S_{n+1})$ over the range $p_n$ through $q_n$. We first show the result for the forward delayed sums only. For the first step, we consider the martingale difference sequence to be uniformly bounded and the ratio $(X_{n+1}/S_{n})$, which itself is a martingale difference sequence with respect to the filtration $(\FF_{n+1})$.

\bl Let $\Lp\Lp X_n,\FF_n\Rp\Rp_{n=1}^\infty$ be a uniformly bounded martingale difference sequence. Then, {for any pair of comparable sequences $(p_n)$ and $(q_n)$ and for any $\epsilon>0$,}
\[
\lim_{n\to\infty}\Prob\Lp\themax\left|\sum_{k=p_n}^{m}\frac{X_{k+2}}{S_{k+1}}\right|>\epsilon\Rp=0.
\]
\label{BddMG1}
\el
\begin{proof}
Let $\lambda$ be the uniform bound for the martingale difference sequence. Fix $0<B<A$ and consider the event $E_n(A, B)$ defined in~\eqref{eq: E def}. On this event, we have for $k\geq p_n$, $S_k \geq S_{p_n+1} \ge B (p_n+1)$. Further, it is easy to check that $\Lp \Lp X_{k+2} \chi \Lp S_{p_n+1} \ge B (p_n+1) \Rp / S_{k+1}, \FF_{k+2} \Rp \Rp_{k\ge p_n}$ is a martingale difference sequence. So we have, using Doob's $L^2$ inequality,
\begin{align*}
&\Prob\Lp\themax\left|\sum_{k=p_n}^{m}\frac{X_{k+2}}{S_{k+1}}\right|>\epsilon\Rp\\
\leq &\Prob\Lp E_n(A,B)^c\Rp + \Prob\Lp \themax\left|\sum_{k=p_n}^{m}\frac{X_{k+2}}{S_{k+1}}\right|>\epsilon; E_n(A, B)\Rp\\
\leq &\Prob\Lp E_n(A,B)^c\Rp + \Prob\Lp \themax\left|\sum_{k=p_n}^{m}\frac{X_{k+2}}{S_{k+1}} \chi \Lp S_{p_n+1} \ge B (p_n+1) \Rp \right|>\epsilon\Rp\\
\leq &\Prob\Lp E_n(A,B)^c\Rp + \frac1{\epsilon^2} \sum_{k=p_n}^{q_n} \Exp\Lp \frac{X_{k+2}^2}{S_{k+1}^2} \chi \Lp S_{p_n+1} \ge B (p_n+1) \Rp \Rp\\
\leq &\Prob\Lp E_n(A,B)^c\Rp + \frac{\lambda^2 q_n}{\epsilon^2 B^2 p_n^2} \leq \Prob\Lp E_n(A,B)^c\Rp + \frac{\lambda^2 M}{\epsilon^2 B^2 p_n},
\end{align*}
where the second term goes to $0$ as $n$ goes to infinity. Then the first term is controlled using Proposition~\ref{weakstep} by letting $B\to 0$ and $A\to\infty$.
\end{proof}

We next correct the error term to the required one, but continue to assume the martingale differences to be {uniformly} bounded.
\bl {Let $\Lp\Lp X_n,\FF_n\Rp\Rp_{n=1}^\infty$ be a uniformly bounded martingale difference sequence. Then, for any pair of comparable sequences $(p_n)$ and $(q_n)$ and for any $\epsilon>0$,}
\[
\lim_{n\to\infty}\Prob\Lp\themax\left|\sum_{k=p_n}^{m}\frac{X_{k+1}}{S_{k+1}}\right|>\epsilon\Rp=0.
\]
\label{BddMG2}
\el
\begin{proof}Let $|X_n|\leq\lambda$ for all $n$. Using triangle inequality we get that, for $p_n\leq m\leq q_n$,
\[
{\left|\left|\sum_{k=p_{n}}^{m}\frac{X_{k+1}}{S_{k+1}}\right|-\left|\sum_{k=p_{n}}^{m}\frac{X_{k+1}}{S_{k}}\right|\right| \leq \lambda \sum_{k=p_{n}}^m \Lp \frac{1}{S_{k}}-\frac{1}{S_{k+1}} \Rp \leq \frac{\lambda}{S_{p_n}}.}
\]
Therefore
\[
{\left|\themax\left|\sum_{k=p_{n}}^{m}\frac{X_{k+1}}{S_{k+1}}\right| - \themax\left|\sum_{k=p_{n}}^{m} \frac{X_{k+1}}{S_{k}} \right| \right| \leq \frac{\lambda}{S_{p_n}},}
\]
which is almost surely negligible by {Corollary~\ref{cor: step}. The result then follows by applying Lemma~\ref{BddMG1} for the pair of sequences $(p_n-1)$ and $(q_n-1)$, which are comparable if $(p_n)$ and $(q_n)$ are so.}
\end{proof}

We next replace the condition of boundedness by uniform integrability.
\bp {Let $\Lp\Lp X_n,\FF_n\Rp\Rp_{n=1}^\infty$ be a uniformly integrable sequence of martingale differences. Then, for any pair of comparable sequences $(p_n)$ and $(q_n)$ and for any $\epsilon>0$,}
\[
\lim_{n\to\infty}\Prob\Lp\themax\left|\sum_{k=p_n}^{m}\frac{X_{k+1}}{S_{k+1}}\right|>\epsilon\Rp=0.
\]\label{FwdDet}
\ep
\begin{proof}
Fix an $\eta>0$. Since $\Lp X_n\Rp$ is uniformly integrable, we get $\lambda>0$ such that, $\Exp\Lp|X_n|\chi\Lp|X_n|>\lambda\Rp\Rp<\eta$, for all $n$. Define
\begin{eqnarray*}
U_n:=\MGD{X_n\chi(|X_n|\leq\lambda)}{n-1}\\
V_n:=\MGD{X_n\chi(|X_n|>\lambda)}{n-1}.
\end{eqnarray*}
Note $X_n=U_n+V_n$ as $\CExp{X_n}{n-1}=0$, and $(U_n)$ is a uniformly bounded martingale difference sequence. Then
\begin{multline*}
\Prob\Lp\themax\left|\sum_{k=p_n}^{m}\frac{X_{k+1}}{S_{k+1}}\right|>\epsilon\Rp\\
\leq \Prob\Lp\themax\left|\sum_{k=p_n}^{m}\frac{U_{k+1}}{S_{k+1}}\right|>\frac{\epsilon}{2}\Rp + \Prob\Lp\themax\left|\sum_{k=p_n}^{m}\frac{V_{k+1}}{S_{k+1}}\right|>\frac{\epsilon}{2}\Rp.
\end{multline*}
From Lemma~\ref{BddMG2}, the first term on right becomes negligible.

Fix $A$ and $B$ with {$0<B<A$}. On the event $E_n(A,B)$ we have $1/S_{m+1}\leq 1/(B(m+1))\leq 1/(Bm)$ for $p_n\leq m\leq q_n$. Since $\Exp|V_n|\leq 2\eta$, we then have
\begin{align*}
&\Prob\Lp\themax\left|\sum_{k=p_n}^{m}\frac{V_{k+1}}{S_{k+1}}\right|>\frac{\epsilon}{2}\Rp \leq \Prob(E_n(A,B)^c) + \Prob\Lp \sum_{k=p_n}^{q_n}\frac{|V_{k+1}|}{Bk}>\frac{\epsilon}{2}\Rp\\
&\qquad\leq \Prob(E_n(A,B)^c)+\frac{4\eta}{B\epsilon}\sum_{k=p_n}^{q_n}\frac{1}{k}\\
&\qquad\leq \Prob(E_n(A,B)^c)+\frac{4\eta}{B\epsilon}\Lp 1+\log q_n-\log(p_n-1)\Rp\\
&\qquad\leq \Prob(E_n(A,B)^c)+\frac{4\eta}{B\epsilon}\Lp 2 + \log M \Rp,
\end{align*}
eventually in $n$, since $q_n\leq Mp_n$. We also use~\eqref{eq: log} in the penultimate step. The result then follows by first letting $n\to\infty$, followed by $\eta\to 0$ and then finally, using Proposition~\ref{weakstep}, by letting $B\to 0$ and $A\to\infty$.
\end{proof}

Finally, we get the negligibility for the backward delayed sums as well.
\bp {Let $\Lp\Lp X_n,\FF_n\Rp\Rp_{n=1}^\infty$ be a uniformly integrable sequence of martingale differences. Then, for any pair of comparable sequences $(p_n)$ and $(q_n)$ and for any $\epsilon>0$,}
\[
\lim_{n\to\infty}\Prob\Lp\themax\left|\sum_{k=m+1}^{q_n}\frac{X_{k+1}}{S_{k+1}}\right|>\epsilon\Rp=0.
\]
\label{BwdDet}\ep
\begin{proof}
Using triangle inequality, we have
\[
\left|\sum_{k=m+1}^{q_n}\frac{X_{k+1}}{S_{k+1}}\right| \leq
\left|\sum_{k=p_n}^{m}\frac{X_{k+1}}{S_{k+1}}\right| +
\left|\sum_{k=p_n}^{q_n}\frac{X_{k+1}}{S_{k+1}}\right|,
\]
which gives
\[
\themax\left|\sum_{k=m+1}^{q_n}\frac{X_{k+1}}{S_{k+1}}\right|\leq 2\themax\left|\sum_{k=p_n}^{m}\frac{X_{k+1}}{S_{k+1}}\right|
\]
and the result follows from Proposition~\ref{FwdDet}.
\end{proof}

\subsubsection{Comparing level crossing times}
The above negligibility conditions are applicable to deterministic range only and are applicable to level crossing times for the deterministic step size sequence. To apply the negligibility of forward and backward delayed sums, we need to bound the level crossing times for the random step size sequence by that for deterministic step size sequence. We achieve that on events of arbitrarily high probability.

Fix $0<B<A$ and $T>0$. Observe that, using Lemma~\ref{lem: step bd}, for $n\ge e^{TA+1}$, the sequences $p_n=n$ and $q_n=\st{n}{TA}$ or $p_n=\stb{n}{TA}$ and $q_n=n$ satisfy the conditions of Proposition~\ref{weakstep}. The corresponding events are denoted by
\begin{align*}
\overline{E}_n(A, B, T) &= \LT B\leq \frac{S_{m+1}}{m+1} \leq A \quad \text{for all } n\leq m \leq \st{n}{TA} \RT
\intertext{and}
\underline{E}_n(A, B, T) &= \LT B\leq \frac{S_{m+1}}{m+1} \leq A \quad \text{for all } \stb{n}{TA}\leq m \leq n \RT.
\end{align*}
These events can be made of arbitrarily high probability by choosing {$n$} and $A$ appropriately large and $B$ small enough. On these events, the level crossing times for two sequences of step sizes become comparable.
\bl
Fix $A$ and $B$ with $0<B<A$ and $T>0$. Then, we have, for all large enough $n$, $\ST{n}{T}\leq\st{n}{TA}$ on the event $\overline{E}_n(A, B, T)$, while $\stb{n}{TA} \leq \STb{n}{T}$ on the event $\underline{E}_n(A, B, T)$. \label{lem: compare}
\el
\begin{proof}
On the event $\overline E_n(A,B)$, we have
\[
\sum_{m=n}^{\st{n}{TA}}\frac{1}{S_{m+1}}\geq\sum_{m=n}^{\st{n}{TA}}\frac{1}{(m+1)A}>\frac{TA}{A}=T
\]
and hence $\ST{n}{T}\leq\st{n}{TA}$ holds.

For $n\geq e^{TA+1}$, on the event $\underline E_n(A,B)$, we have
\[
\sum_{m=\stb{n}{TA}}^{n-1}\frac{1}{S_{m+1}}\geq\sum_{m=\stb{n}{TA}}^{n-1}\frac{1}{(m+1)A}>\frac{TA}{A}=T
\]
and hence $\stb{n}{TA}\leq\STb{n}{T}$ holds.
\end{proof}

\subsubsection{Forward Summation of Error Terms}
We combine the results of Proposition~\ref{FwdDet} and Lemma~\ref{lem: compare} and apply them to the sequences $(\mgdh_n)$ and $(\bxi_n)$ to check Assumption~\ref{WeakAssm} -- actually its equivalent conditions given in Lemma~\ref{cor: neg} -- for the forward delayed sums. We first check $(\mgdh_n)$ sequence.
\bp For every $\epsilon>0$ and $T>0$
\[
\lim_{n\to\infty}\Prob\Lp\max_{n\leq m\leq\ST{n}{T}}\Lone{\sum_{k=n}^{m}\frac{\mgdh_{k+1}}{S_{k+1}}}>\epsilon\Rp=0.
\]
\label{fwderrmg}\ep
\begin{proof}
{First using~\eqref{eq: D def}, recall that
\begin{align*}
\mgdh_n &= \MGD{\Chi_n\R_n-\csd{n-1}Y_n}{n-1}\\
&= \Chi_n\R_n - \Chi_n\R_n\b{1}^T - \csd{n-1}\H_{n-1} + \csd{n-1}\csd{n-1}\H_{n-1}\b{1}^T.
\end{align*}
Then, by~\eqref{eq: rho compare}, we have, $$|D_{ni}| \le 2 \rho (\R_n) + 2 \rho (\H_{n-1}) \leq 2 \rho(\R_n) + 2 \Exp \Lp \rho(\R_{n}) \mid \FF_{n-1} \Rp.$$ Hence $(D_{ni})$ is uniformly integrable by Assumption~\ref{Assmp: ui}.}

Fix $T>0$. Now, using triangle inequality we get
\[
\Prob\Lp\max_{n\leq m\leq\ST{n}{T}}\Lone{\sum_{k=n}^{m}\frac{\mgdh_{k+1}}{S_{k+1}}}>\epsilon\Rp
\leq\sum_{i=1}^K\Prob\Lp\max_{n\leq m\leq\ST{n}{T}} \left| \sum_{k=n}^{m} \frac{\mgdhc_{k+1,i}}{S_{k+1}} \right| >\frac{\epsilon}{K}\Rp
\]
Fix an $i$ and let $X_n:=\mgdhc_{ni}$. Then $\Lp\Lp X_n,\FF_n\Rp\Rp_{n=1}^\infty$ is uniformly integrable. On the event $\overline E_n(A,B,T)$ we have $\ST{n}{T}\leq\st{n}{TA}$. { Therefore, we have}
\begin{multline*}
\Prob\Lp\max_{n\leq m\leq\ST{n}{T}}\left|\sum_{k=n}^{m}\frac{X_{k+1}}{S_{k+1}}\right| >\frac{\epsilon}{K}\Rp\\ \leq \Prob(\overline E_n(A,B,T)^c) + \Prob\Lp\max_{n\leq m\leq\st{n}{TA}} \left|\sum_{k=n}^{m}\frac{X_{k+1}}{S_{k+1}}\right| >\frac{\epsilon}{K}\Rp.
\end{multline*}
Letting $n\to\infty$, the second term becomes negligible using Proposition~\ref{FwdDet} {and Lemma~\ref{lem: step bd}}. Further, using Proposition~\ref{weakstep} and letting $B\to 0$ and $A\to\infty$, the first term can also be made negligible.
\end{proof}

Next, we consider the forward delayed sum corresponding to the sequence $(\bxi_n)$.
\bp For every $\epsilon>0$ and $T>0$
\[
\lim_{n\to\infty}\Prob\Lp\max_{n\leq m\leq\ST{n}{T}}\Lone{\sum_{k=n}^{m}\frac{\bxi_{k+1}}{S_{k+1}}}>\epsilon\Rp=0.
\]
\label{fwderrtail}\ep
\begin{proof}
{First, using~\eqref{eq: xi def}, $\Lone{\bxi_{k+1}}\leq 2\rho(\H_k-\H)$ holds. Also recall $\widetilde{S}_n=\sum_{k=1}^n\rho\Lp\H_{k-1}-\H\Rp$. So
\begin{multline*}
\max_{n\leq m\leq\ST{n}{T}}\Lone{\sum_{k=n}^{m}\frac{\bxi_{k+1}}{S_{k+1}}}
\leq  \sum_{k=n}^{\ST{n}{T}}\frac{\Lone{\bxi_{k+1}}}{S_{k+1}}\\
\leq  \frac{1}{S_{n+1}}\sum_{k=n}^{\ST{n}{T}}\Lone{\bxi_{k+1}}
\leq  \frac{2}{S_{n+1}}\sum_{k=n}^{\ST{n}{T}}\mynorm{\H_k-\H}
\leq  2 \frac{\widetilde{S}_{\ST{n}{T}+1}}{S_{n+1}}.
\end{multline*}}
On the event $\overline E_n(A,B,T)$ we have $\ST{n}{T}\leq\st{n}{TA}$. Therefore on the event $\overline E_n(A,B,T)$, we have
\begin{multline*}
     \max_{n\leq m<\ST{n}{T}}\Lone{\sum_{k=n}^{m}\frac{\bxi_{k+1}}{S_{k+1}}}
\leq 2 \frac{\widetilde{S}_{\ST{n}{T}+1}}{S_{n+1}}\\
\leq 2 \frac{\widetilde{S}_{\st{n}{TA}+1}}{(n+1)B}
\leq 2 \frac{\widetilde{S}_{\st{n}{TA}+1}}{\st{n}{TA}+1}\frac{\st{n}{TA}+1}{(n+1)B}.
\end{multline*}
Thus, we have,
\begin{multline*}
\limsup_{n\to\infty}\Prob\Lp\max_{n\leq m<\ST{n}{T}}\Lone{\sum_{k=n}^{m}\frac{\bxi_{k+1}}{S_{k+1}}}>{\epsilon}\Rp\\ \leq \Prob\Lp \overline E_n(A,B,T)^c\Rp + \Prob\Lp \frac{\widetilde{S}_{\st{n}{TA}+1}}{\st{n}{TA}+1} \frac{\st{n}{TA}+1}{(n+1)B} > \frac{\epsilon}2 \Rp.
\end{multline*}
By {Assumption~\ref{Assmp: weak}}, $\widetilde{S}_n/n\to 0$ in probability. Further, observe that, from Lemma~\ref{lem: step bd}, $(\st{n}{TA}+1)/n$ is bounded by $e^{TA+1}+1$. So the second term goes to zero as $n$ goes to infinity. The first term is made negligible using Proposition~\ref{weakstep} and further letting $B\to0$ and $A\to\infty$.
\end{proof}

\subsubsection{Backward Summation of Error Terms}
Assumption~\ref{WeakAssm}, through its equivalent conditions in Lemma~\ref{cor: neg}, for the backward delayed sums can be checked similarly, where we replace the forward level crossing times by the backward ones and Proposition~\ref{FwdDet} by Proposition~\ref{BwdDet}. We simply state the result.

\bp For every $T>0$,
\[
\max_{\STb{n}{T}< m \leq n-1}\Lone{\sum_{k=m}^{n-1}\frac{\mgdh_{k+1}}{S_{k+1}}}\to 0 \quad \text{and} \quad
\max_{\STb{n}{T}<m\leq n-1}\Lone{\sum_{k=m}^{n-1}\frac{\bxi_{k+1}}{S_{k+1}}}\to 0
\]
in probability.\label{bwderrtail}\ep

\subsection{Main Results} \label{subsec: result urn}
Finally, in this Subsection, we collect the results for urn model and prove them. Recall from pg.~\pageref{def N} that $\N_n$ is the color count vector i.e., $\N_n=\sum_{m=1}^n\Chi_m$.

\begin{thm}
Under the {Assumptions~\ref{Assmp: basic}~-~\ref{Assmp: ui}}, the following convergence in $L^1$ and, hence in probability, take place:
\begin{align}
  \frac{\C_n}{S_n} &\to \b{\pi}_{\H}; \label{prop weak}\\
  \frac{S_n}n &\to \lambda_{\H}; \label{total weak}\\
  \frac{\C_n}n &\to \lambda_{\H} \b{\pi}_{\H}; \label{comp weak}\\
  \frac{\N_n}n &\to \b{\pi}_{\H}. \label{count weak}
\end{align}
\end{thm}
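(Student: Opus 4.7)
The plan is to handle the four claims in the listed order, bootstrapping the later ones from the first.

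For~\eqref{prop weak}, the strategy is to apply Theorem~\ref{thm: SA weak} to the evolution equation~\eqref{SA} with state variable $\csd{n}$, step size $a_n = 1/S_{n+1}$, drift $(\H, \x) \mapsto \h(\x) = \x\H - \x(\x\H\1^T)$, and error term $\mgdh_n + \bxi_n$. The proportion vectors lie in the probability simplex, which is closed, bounded, convex. Joint continuity of the drift in $(\H, \x)$ is clear since $\h$ is a polynomial. Corollary~\ref{cor: step} gives the required step-size conditions. Proposition~\ref{Lotka} identifies $\lpf$ as the unique probability-valued solution of $\dot\X = \h(\X)$, and Proposition~\ref{prop: cont} provides continuity in $\H$. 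Finally, Propositions~\ref{fwderrmg},~\ref{fwderrtail} together with Proposition~\ref{bwderrtail} verify Assumption~\ref{WeakAssm} via Lemma~\ref{cor: neg}. Theorem~\ref{thm: SA weak} then delivers $\csd{n}\to\lpf$ in $L^1$.

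For~\eqref{total weak}, I would decompose
\[
S_n - S_0 = \Bar S_n + \sum_{k=1}^n \csd{k-1}(\H_{k-1}-\H)\1^T + \sum_{k=1}^n \csd{k-1}\H\1^T,
\]
where $\Bar S_n = \sum_{k=1}^n (Y_k - \CExp{Y_k}{k-1})$. Since $Y_k \le \rho(\R_k)$ and $(\rho(\R_k))$ is uniformly integrable by Assumption~\ref{Assmp: ui}, Proposition~\ref{thm: chow_prob} gives $\Bar S_n/n\to 0$ in $L^1$. For the second term, $|\csd{k-1}(\H_{k-1}-\H)\1^T| \le \rho(\H_{k-1}-\H)$, so Lemma~\ref{lem: L1} shows its Cesaro average vanishes in $L^1$. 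For the third term, the $L^1$ convergence in~\eqref{prop weak} implies Cesaro $L^1$ convergence, $n^{-1}\sum_{k=1}^n \csd{k-1} \to \lpf$ in $L^1$; then the bound $|(\,\cdot\,)\H\1^T - \lpf\H\1^T| \le \Lone{\,\cdot\, - \lpf}\,\rho(\H)$ with $\rho(\H)\in L^1$ (Lemma~\ref{lem: L1}) and dominated convergence yields $L^1$ convergence of this term to $\lpf\H\1^T = \pfval = \lambda_\H$. Adding up, $S_n/n \to \lambda_\H$ in $L^1$.

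Claim~\eqref{comp weak} is then immediate from $\C_n/n = (\C_n/S_n)(S_n/n)$: the first factor is uniformly bounded and converges in probability to $\lpf$ by~\eqref{prop weak}, the second converges in $L^1$ to $\lambda_\H$ by~\eqref{total weak}, so Lemma~\ref{L1 prod} gives $L^1$ convergence to $\lambda_\H\lpf$. For~\eqref{count weak}, write $\N_n = \Bar N_n + \sum_{k=1}^n \csd{k-1}$ where $\Bar N_n = \sum_{k=1}^n (\Chi_k - \csd{k-1})$ is a martingale with increments bounded in $\ell^1$ by $2$; hence Proposition~\ref{thm: chow_prob} gives $\Bar N_n/n \to \0$ in $L^1$, while $n^{-1}\sum_{k=1}^n \csd{k-1}\to\lpf$ in $L^1$ as already noted, proving~\eqref{count weak}.

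The most delicate step is~\eqref{total weak}, specifically the handling of the term $\sum_{k=1}^n \csd{k-1}\H\1^T$: $\H$ is random, so one cannot simply pull it out of a scalar convergence statement, and one must combine the uniform boundedness of the proportion vectors with the integrability of $\rho(\H)$ supplied by Lemma~\ref{lem: L1}. Once this is in hand, the remaining claims follow mechanically.
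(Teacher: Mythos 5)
Your proposal is correct and follows essentially the same route as the paper: \eqref{prop weak} via Theorem~\ref{thm: SA weak} with Propositions~\ref{Lotka}, \ref{prop: cont} and the error-term propositions, \eqref{total weak} via the martingale decomposition of $S_n$ together with Lemma~\ref{lem: L1}, the bound $\rho(\H)\frac1n\sum\Lone{\csd{m-1}-\lpf}$ and dominated convergence, and \eqref{comp weak}, \eqref{count weak} via Lemma~\ref{L1 prod} and Proposition~\ref{thm: chow_prob}. No gaps worth noting.
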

\begin{proof}
We have already checked in Proposition~\ref{Lotka} that the ODE $\dot\x = \h(\x)$ associated with stochastic approximation {has the constant function $\lpf$ as the unique probability solution. Also Proposition~\ref{prop: cont} shows $\lpf$ is a continuous function of $\H$.} The conditions on error terms of the stochastic approximation equation~\eqref{SA} has been checked in Subsection~\ref{subsec: prob}. Hence the convergence~\eqref{prop weak} of $\csl{n} \to \lpf$ in probability and in $L^1$ holds using Theorem~\ref{thm: SA weak}.

Using {$0\leq Y_n=\Chi_n\R_n\b{1}^T\leq\mynorm{\R_n}$ almost surely} and {Assumption~\ref{Assmp: ui}}, $(Y_n)$ is a uniformly integrable sequence. Then, using Proposition~\ref{thm: chow_prob}, $S_n/n - (1/n)\sum_{m=1}^n\CExp{Y_m}{m-1}\to 0$ in $L^1$. Therefore, for~\eqref{total weak}, it is enough to show that $(1/n)\sum_{m=1}^n\CExp{Y_m}{m-1}\to\pfval$ in $L^1$. Recall that
\begin{equation} \label{eq: split}
\frac{1}{n}\sum_{m=1}^n\CExp{Y_m}{m-1} = \frac{1}{n}\sum_{m=1}^n\csd{m-1}\Lp\H_{m-1}-\H\Rp\b{1}^T + \frac{1}{n}\sum_{m=1}^n\csd{m-1}\H\b{1}^T.
\end{equation}
By Lemma~\ref{lem: L1}, the first term in~\eqref{eq: split} converges to $0$ in $L^1$. So it is enough to show the second term of~\eqref{eq: split} to have the limit $\lambda_{\H}$ in $L^1$. Then
{\begin{align*}
\left| \frac1n \sum_{m=1}^n \csd{m-1} \H\1^T - \pfval \right| &= \left| \frac1n \sum_{m=1}^n \Lp \csd{m-1} - \lpf \Rp \H\1^T \right|\\
&\le \rho(\H) \frac1n \sum_{m=1}^{n} \Lone{\csd{m-1} - \lpf},
\end{align*}}
which is bounded by integrable $2\rho(\H)$, and hence converges to $0$ in $L^1$, using~\eqref{prop weak} and Dominated Convergence Theorem.

The convergence in $L^1$ in~\eqref{comp weak} will follow from~\eqref{prop weak} and~\eqref{total weak} using Lemma~\ref{L1 prod}.

Finally, for the convergence in~\eqref{count weak}, observe that $\C_n/S_n\to\b{\pi}_{\H}$ in $L^1$ and so does its Cesaro average. So it is enough to show $L^1$ convergence of
$$\frac{\N_n}{n} - \frac{1}{n}\sum_{m=0}^{n-1}\csd{m} = \frac{1}{n}\sum_{m=1}^n\Lp\Chi_m-\csd{m-1}\Rp \to \b{0}.$$
Since $\Lp \Chi_n \Rp$ is bounded and hence uniform integrable, the required $L^1$ negligibility holds by Proposition~\ref{thm: chow_prob}.
\end{proof}


\begin{thebibliography}{25}

\bibitem[\protect\citeauthoryear{Athreya and Karlin}{1968}]{AthreyaKarlin}
\begin{barticle}[author]
\bauthor{\bsnm{Athreya},~\bfnm{K.~B.}\binits{K.~B.}} \AND
  \bauthor{\bsnm{Karlin},~\bfnm{S.}\binits{S.}}
(\byear{1968}).
\btitle{Embedding of urn schemes into continuous time {M}arkov branching
  processes and related limit theorems}.
\bjournal{Ann. Math. Statist.}
\bvolume{39}
\bpages{1801--1817}.
\bmrnumber{0232455}
\end{barticle}
\endbibitem

\bibitem[\protect\citeauthoryear{Athreya and Ney}{2004}]{AthreyaNey}
\begin{bbook}[author]
\bauthor{\bsnm{Athreya},~\bfnm{K.~B.}\binits{K.~B.}} \AND
  \bauthor{\bsnm{Ney},~\bfnm{P.~E.}\binits{P.~E.}}
(\byear{2004}).
\btitle{Branching processes}.
\bpublisher{Dover Publications, Inc., Mineola, NY}.
\bmrnumber{2047480}
\end{bbook}
\endbibitem

\bibitem[\protect\citeauthoryear{Bai and Hu}{2005}]{BaiHu}
\begin{barticle}[author]
\bauthor{\bsnm{Bai},~\bfnm{Z.~D.}\binits{Z.~D.}} \AND
  \bauthor{\bsnm{Hu},~\bfnm{F.}\binits{F.}}
(\byear{2005}).
\btitle{Asymptotics in randomized {URN} models}.
\bjournal{Ann. Appl. Probab.}
\bvolume{15}
\bpages{914--940}.
\bmrnumber{2114994}
\end{barticle}
\endbibitem

\bibitem[\protect\citeauthoryear{Bena\"im}{1999}]{benaim}
\begin{bincollection}[author]
\bauthor{\bsnm{Bena\"im},~\bfnm{M.}\binits{M.}}
(\byear{1999}).
\btitle{Dynamics of stochastic approximation algorithms}.
In \bbooktitle{S\'eminaire de {P}robabilit\'es, {XXXIII}}.
\bseries{Lecture Notes in Math.}
\bvolume{1709}
\bpages{1--68}.
\bpublisher{Springer, Berlin}.
\bmrnumber{1767993}
\end{bincollection}
\endbibitem

\bibitem[\protect\citeauthoryear{Borkar}{2008}]{Borkar}
\begin{bbook}[author]
\bauthor{\bsnm{Borkar},~\bfnm{V.~S.}\binits{V.~S.}}
(\byear{2008}).
\btitle{Stochastic approximation}.
\bpublisher{Cambridge University Press, Cambridge; Hindustan Book Agency, New
  Delhi}
\bnote{A dynamical systems viewpoint}.
\bmrnumber{2442439}
\end{bbook}
\endbibitem

\bibitem[\protect\citeauthoryear{Chow}{1965}]{Chow}
\begin{barticle}[author]
\bauthor{\bsnm{Chow},~\bfnm{Y.~S.}\binits{Y.~S.}}
(\byear{1965}).
\btitle{Local convergence of martingales and the law of large numbers}.
\bjournal{Ann. Math. Statist.}
\bvolume{36}
\bpages{552--558}.
\bmrnumber{0182040}
\end{barticle}
\endbibitem

\bibitem[\protect\citeauthoryear{Dasgupta and Maulik}{2011}]{DasguptaMaulik}
\begin{barticle}[author]
\bauthor{\bsnm{Dasgupta},~\bfnm{A.}\binits{A.}} \AND
  \bauthor{\bsnm{Maulik},~\bfnm{K.}\binits{K.}}
(\byear{2011}).
\btitle{Strong laws for urn models with balanced replacement matrices}.
\bjournal{Electron. J. Probab.}
\bvolume{16}
\bpages{no. 63, 1723--1749}.
\bmrnumber{2835252}
\end{barticle}
\endbibitem

\bibitem[\protect\citeauthoryear{Freedman}{1965}]{Freedman}
\begin{barticle}[author]
\bauthor{\bsnm{Freedman},~\bfnm{D.~A.}\binits{D.~A.}}
(\byear{1965}).
\btitle{Bernard {F}riedman's urn}.
\bjournal{Ann. Math. Statist}
\bvolume{36}
\bpages{956--970}.
\bmrnumber{0177432}
\end{barticle}
\endbibitem

\bibitem[\protect\citeauthoryear{Gouet}{1997}]{Gouet}
\begin{barticle}[author]
\bauthor{\bsnm{Gouet},~\bfnm{R.}\binits{R.}}
(\byear{1997}).
\btitle{Strong convergence of proportions in a multicolor {P}\'olya urn}.
\bjournal{J. Appl. Probab.}
\bvolume{34}
\bpages{426--435}.
\bmrnumber{1447347}
\end{barticle}
\endbibitem

\bibitem[\protect\citeauthoryear{Hall and Heyde}{1980}]{HallHeyde}
\begin{bbook}[author]
\bauthor{\bsnm{Hall},~\bfnm{P.}\binits{P.}} \AND
  \bauthor{\bsnm{Heyde},~\bfnm{C.~C.}\binits{C.~C.}}
(\byear{1980}).
\btitle{Martingale limit theory and its application}.
\bpublisher{Academic Press, Inc. [Harcourt Brace Jovanovich, Publishers], New
  York-London}
\bnote{Probability and Mathematical Statistics}.
\bmrnumber{624435}
\end{bbook}
\endbibitem

\bibitem[\protect\citeauthoryear{Karatzas and
  Shreve}{1988}]{KaratzasShreve1988}
\begin{bbook}[author]
\bauthor{\bsnm{Karatzas},~\bfnm{I.}\binits{I.}} \AND
  \bauthor{\bsnm{Shreve},~\bfnm{S.~E.}\binits{S.~E.}}
(\byear{1988}).
\btitle{Brownian motion and stochastic calculus}.
\bseries{Graduate Texts in Mathematics}
\bvolume{113}.
\bpublisher{Springer-Verlag, New York}.
\bmrnumber{917065}
\end{bbook}
\endbibitem

\bibitem[\protect\citeauthoryear{Kushner and Clark}{1978}]{KushnerClark}
\begin{bbook}[author]
\bauthor{\bsnm{Kushner},~\bfnm{H.~J.}\binits{H.~J.}} \AND
  \bauthor{\bsnm{Clark},~\bfnm{D.~S.}\binits{D.~S.}}
(\byear{1978}).
\btitle{Stochastic approximation methods for constrained and unconstrained
  systems}.
\bseries{Applied Mathematical Sciences}
\bvolume{26}.
\bpublisher{Springer-Verlag, New York-Berlin}.
\bmrnumber{499560}
\end{bbook}
\endbibitem

\bibitem[\protect\citeauthoryear{Laruelle and Pag{\`e}s}{2013}]{LaruellePages}
\begin{barticle}[author]
\bauthor{\bsnm{Laruelle},~\bfnm{S.}\binits{S.}} \AND
  \bauthor{\bsnm{Pag{\`e}s},~\bfnm{G.}\binits{G.}}
(\byear{2013}).
\btitle{Randomized urn models revisited using stochastic approximation}.
\bjournal{Ann. Appl. Probab.}
\bvolume{23}
\bpages{1409--1436}.
\bmrnumber{3098437}
\end{barticle}
\endbibitem

\bibitem[\protect\citeauthoryear{Meyer}{2000}]{meyerBook}
\begin{bbook}[author]
\bauthor{\bsnm{Meyer},~\bfnm{C.}\binits{C.}}
(\byear{2000}).
\btitle{Matrix analysis and applied linear algebra}.
\bpublisher{Society for Industrial and Applied Mathematics (SIAM),
  Philadelphia, PA}.
\bmrnumber{1777382}
\end{bbook}
\endbibitem

\bibitem[\protect\citeauthoryear{Meyer}{2015}]{meyer}
\begin{barticle}[author]
\bauthor{\bsnm{Meyer},~\bfnm{C.}\binits{C.}}
(\byear{2015}).
\btitle{Continuity of the {P}erron root}.
\bjournal{Linear Multilinear Algebra}
\bvolume{63}
\bpages{1332--1336}.
\bmrnumber{3299323}
\end{barticle}
\endbibitem

\bibitem[\protect\citeauthoryear{P\'olya}{1930}]{Polya}
\begin{barticle}[author]
\bauthor{\bsnm{P\'olya},~\bfnm{G.}\binits{G.}}
(\byear{1930}).
\btitle{Sur quelques points de la th\'eorie des probabilit\'es}.
\bjournal{Ann. Inst. H. Poincar\'e}
\bvolume{1}
\bpages{117--161}.
\bmrnumber{1507985}
\end{barticle}
\endbibitem

\bibitem[\protect\citeauthoryear{P\'olya and
  Eggenberger}{1923}]{PolyaEggenberger}
\begin{barticle}[author]
\bauthor{\bsnm{P\'olya},~\bfnm{G.}\binits{G.}} \AND
  \bauthor{\bsnm{Eggenberger},~\bfnm{F.}\binits{F.}}
(\byear{1923}).
\btitle{\"Uber die Statistik verketteter Vorg\"ange}.
\bjournal{Z. Agnew. Math. Mech.}
\bvolume{3}
\bpages{279--289}.
\end{barticle}
\endbibitem

\bibitem[\protect\citeauthoryear{{Renlund}}{2010}]{Renlund}
\begin{bunpublished}[author]
\bauthor{\bsnm{{Renlund}},~\bfnm{H.}\binits{H.}}
(\byear{2010}).
\btitle{Generalized Polya urns via stochastic approximation}.
\bnote{Preprint. arxiv:1002.3716}.
\end{bunpublished}
\endbibitem

\bibitem[\protect\citeauthoryear{{Renlund}}{2011}]{Renlund2}
\begin{bunpublished}[author]
\bauthor{\bsnm{{Renlund}},~\bfnm{H.}\binits{H.}}
(\byear{2011}).
\btitle{Limit theorems for stochastic approximation algorithms}.
\bnote{Preprint. arxiv:1102.4741}.
\end{bunpublished}
\endbibitem

\bibitem[\protect\citeauthoryear{Resnick}{1992}]{resnick}
\begin{bbook}[author]
\bauthor{\bsnm{Resnick},~\bfnm{S.}\binits{S.}}
(\byear{1992}).
\btitle{Adventures in stochastic processes}.
\bpublisher{Birkh\"auser Boston, Inc., Boston, MA}.
\bmrnumber{1181423}
\end{bbook}
\endbibitem

\bibitem[\protect\citeauthoryear{Robbins and Monro}{1951}]{RobbinsMonro}
\begin{barticle}[author]
\bauthor{\bsnm{Robbins},~\bfnm{H.}\binits{H.}} \AND
  \bauthor{\bsnm{Monro},~\bfnm{S.}\binits{S.}}
(\byear{1951}).
\btitle{A stochastic approximation method}.
\bjournal{Ann. Math. Statistics}
\bvolume{22}
\bpages{400--407}.
\bmrnumber{0042668}
\end{barticle}
\endbibitem

\bibitem[\protect\citeauthoryear{Seneta}{2006}]{Seneta}
\begin{bbook}[author]
\bauthor{\bsnm{Seneta},~\bfnm{E.}\binits{E.}}
(\byear{2006}).
\btitle{Non-negative matrices and {M}arkov chains}.
\bseries{Springer Series in Statistics}.
\bpublisher{Springer, New York}.
\bmrnumber{2209438}
\end{bbook}
\endbibitem

\bibitem[\protect\citeauthoryear{Stroock and Varadhan}{2006}]{StroockVaradhan}
\begin{bbook}[author]
\bauthor{\bsnm{Stroock},~\bfnm{D.~W.}\binits{D.~W.}} \AND
  \bauthor{\bsnm{Varadhan},~\bfnm{S.~R.~S.}\binits{S.~R.~S.}}
(\byear{2006}).
\btitle{Multidimensional diffusion processes}.
\bseries{Classics in Mathematics}.
\bpublisher{Springer-Verlag, Berlin}
\bnote{Reprint of the 1997 edition}.
\bmrnumber{2190038}
\end{bbook}
\endbibitem

\bibitem[\protect\citeauthoryear{Zhang}{2012}]{Zhang12}
\begin{barticle}[author]
\bauthor{\bsnm{Zhang},~\bfnm{L.~X.}\binits{L.~X.}}
(\byear{2012}).
\btitle{The {G}aussian approximation for generalized {F}riedman's urn
  model with heterogeneous and unbalanced updating}.
\bjournal{Sci. China Math.}
\bvolume{55}
\bpages{2379--2404}.
\bmrnumber{2994126}
\end{barticle}
\endbibitem

\bibitem[\protect\citeauthoryear{Zhang}{2016}]{Zhang}
\begin{barticle}[author]
\bauthor{\bsnm{Zhang},~\bfnm{L.~X.}\binits{L.~X.}}
(\byear{2016}).
\btitle{Central limit theorems of a recursive stochastic algorithm with
  applications to adaptive designs}.
\bjournal{Ann. Appl. Probab.}
\bvolume{26}
\bpages{3630--3658}.
\bmrnumber{3582813}
\end{barticle}
\endbibitem

\end{thebibliography}

\end{document}